\documentclass[11pt,reqno]{amsart}
\usepackage{a4wide}
\usepackage[english]{babel}
\usepackage{hyperref}
\usepackage{amsfonts,amsmath,amssymb,amsthm}
\usepackage[foot]{amsaddr}
\usepackage{latexsym}
\usepackage{layout}
\usepackage{dsfont}
\usepackage{xcolor}
\usepackage{graphicx}
\usepackage{mathtools}
\mathtoolsset{showonlyrefs}
\usepackage{orcidlink}
\usepackage[authoryear,round]{natbib}

\newtheorem{theorem}{Theorem}[section]
\newtheorem{proposition}[theorem]{Proposition}
\newtheorem{lemma}[theorem]{Lemma}
\newtheorem{corollary}[theorem]{Corollary}
\newtheorem{remark}[theorem]{Remark}

\newcommand{\p}{\mathbb{P}}
\newcommand{\E}{\mathbb{E}}
\newcommand{\e}{\mathrm{e}}

\newcommand{\reals}{\mathbb{R}}
\newcommand{\ind}{\mathds{1}}

\newcommand{\Wq}{W^{(q)}}
\newcommand{\Wqprime}{W^{(q) \prime}}

\newcommand{\Hq}{H^{(q)}_{K,S}}
\newcommand{\Hqprime}{H^{(q) \prime}_{K,S}}

\newcommand{\F}{\mathcal{F}}
\newcommand{\diff}{\mathrm{d}}
\newcommand{\delx}{\partial_x}

\newcommand{\ettap}[1]{\frac{S}{q} + \frac{K}{q+K}\left(#1 + \frac{\mu - S}{q}\right)}
\newcommand{\ettam}[1]{\frac{S}{q} - \frac{K}{q+K}\left(#1 + \frac{\mu - S}{q}\right)}
\newcommand{\ettaz}{\frac{S}{q} + \frac{K}{q+K}\frac{\mu - S}{q}}

\renewcommand{\geq}{\geqslant}
\renewcommand{\leq}{\leqslant}

\allowdisplaybreaks

\begin{document}

\title[L{\o}kka-Zervos dichotomy for AC dividend strategies]{A new perspective on the L{\o}kka-Zervos dichotomy for absolutely continuous dividend strategies}

\author[]{Tommy Mastromonaco \orcidlink{0009-0000-5095-2374}}
\address{D\'epartement de math\'ematiques, Universit\'e du Qu\'ebec \`a Montr\'eal (UQAM), 201 av.\ Pr\'esident-Kennedy, Montr\'eal (Qu\'ebec) H2X 3Y7, Canada}
\email{mastromonaco.tommy@uqam.ca, fendrinacer11@gmail.com, renaud.jf@uqam.ca}
\email[]{simard.clarence@uqam.ca }

\author[]{Nacer Fendri}

\author[]{Jean-Fran\c{c}ois Renaud \orcidlink{0009-0002-9910-8523}}

\author[]{Clarence Simard \orcidlink{0000-0003-3860-3432}}
% \thanks{*Corresponding author: simard.clarence@uqam.ca .}

\date{\today}

\keywords{Maximization of dividends, absolutely continuous strategies, capital injections, penalty at ruin, Brownian motion.}
\subjclass[2020]{93E20, 91G05}

\begin{abstract}
We revisit the optimization problem (and its dichotomous solution) analyzed in \cite{renaud-et-al_2023}. By choosing affine functions (of the surplus level) to bound the dividend rates, we are able to provide a more explicit derivation of the solution, avoiding the use of viscosity solutions. Moreover, with explicitly parameterized expressions, we are able to study analytical properties of the optimal thresholds and to refine the statement of the dichotomy. To reach these objectives, we add a penalty-at-ruin parameter in the performance function, allowing us to unify the expressions of the value functions of the two subproblems appearing in the dichotomous solution; as a by-product, the dichotomy can also be expressed in terms of this newly added parameter. Finally, we perform sensitivity analyses to illustrate that the range of values generated in an affine-bound framework is flexible and can span the range from a constant bound to the singular version of this problem.
\end{abstract}

\maketitle

\section{Introduction}

In this short paper, we revisit the optimization problem analyzed in \cite{renaud-et-al_2023}, i.e., the maximization of absolutely continuous dividends, in a Brownian risk model, with the possibility (not the obligation) of making capital injections. More precisely, in \cite{renaud-et-al_2023} absolutely continuous (AC) dividend strategies with payment rates bounded by a non-decreasing concave function of the capital surplus are considered. With such a general bound function, explicit expressions are unavailable for the value function and for the optimal strategy identified. In this work, we consider affine bound functions to offer not only a more explicit and transparent derivation of the solution, but also new results on the behaviour of the optimal thresholds with respect to the parameters of the bound function. Furthermore, to highlight the trade-off between the maximization of dividend payments and the possibility of declaring ruin, we add a penalty-at-ruin parameter to the performance function, in the spirit of \cite{shreve-lehoczky-gaver_1984}. By isolating this feature of the problem, we are better able to compare the performance functions of strategies without any injections and those with mandatory injections to keep the surplus from becoming negative.

\subsection{Related literature}

It is now well known, thanks to the pioneering work of \cite{lokka-zervos_2008} who considered the problem of maximizing (singular) dividends with the possibility (but not the obligation) of injecting capital, that the solution takes the form of a dichotomy: either capital is always injected whenever the firm faces bankruptcy, or no capital is ever injected. In other words, their solution to the joint optimization of capital injections and dividend payments is based on the solutions of two auxiliary/subproblems: the pure maximization of dividends, without the possibility of injecting capital, and the maximization of dividends with mandatory last-minute injections to prevent ruin. The pure dividend maximization problem often bears the name of Bruno de Finetti, see \cite{definetti_1957}, and there is now a wealth of publications on different versions of it; see, e.g., \cite{jeanblanc-shiryaev_1995,asmussen-taksar_1997,schmidli_2006,avram-palmowski-pistorius_2007}. The second family of optimization problems, sometimes called \textit{bailout optimal dividends problems}, incorporates ruin-preventing capital injections in the optimal dividends problem; see, e.g., \cite{shreve-lehoczky-gaver_1984, ST2002}.

Several papers have considered modifications of the problem studied in \cite{lokka-zervos_2008} and have also obtained dichotomous solutions; for example, in \cite{Zhu_2017}, the problem with AC dividend strategies with payment rates bounded by a constant, but with impulse capital injections, was considered. A recent trend in (pure) dividend optimization considers AC dividend payments with payment rates bounded by a function of the surplus. In \cite{renaud-simard_2021}, the bound function is an affine function while in \cite{locas-renaud_2024} a more general non-decreasing concave function is considered. Recall that this has been partly motivated by the objective of obtaining optimal strategies with the desirable property of yielding a stable flow of dividends. In this new framework, the problem of L\o kka \& Zervos was revisited by \cite{renaud-et-al_2023}. The dichotomous solution turned out to be much more difficult to obtain and sophisticated mathematical tools, such as viscosity solutions, were needed.

\subsection{Contributions}

In this paper, we provide a new perspective on the L\o kka-Zervos dichotomy by revisiting the AC version of the problem as studied in \cite{renaud-et-al_2023}. As alluded to above, we apply an affine bound function (of the current surplus) on the dividend rates and we add a penalty-at-bankruptcy component in the performance function. With these two modifications, we are able to:
\begin{itemize}
    \item provide an \textit{elementary} and more explicit derivation of the dichotomous solution, in contrast with the derivation in \cite{renaud-et-al_2023} which uses viscosity solution theory (see Section~\ref{sec:sol-main});
    \item prove new monotonicity and ordering properties of the optimal thresholds, which are instrumental in simplifying the derivation leading to the optimal dichotomy (see Proposition~\ref{prop:seuils-croiss} and Lemma~\ref{lem-delxJd});
    \item capitalize on the above more explicit results to offer a more precise formulation of the dichotomy, in particular proving that, for some set of parameters, capital injections are suboptimal regardless of the cost of capital injections and \textit{liquidation as quickly as possible} is optimal (see Section~\ref{subsec:result});
    \item disentangle the impact of declaring ruin from the value of dividend payments and capital injections, thanks to the addition of a penalty-at-ruin parameter in the performance function, and use it as a dual parameter of the cost-of-injection parameter (see Section~\ref{sec:sol-main});
    \item perform sensitivity analyses with respect to the two bound function parameters, which had not been done before for this problem, to illustrate various behaviours (see Section~\ref{sec:numerical}).
\end{itemize}

It must be pointed out that the apparent loss of generality with affine bound functions turns out to be negligible. Indeed, the generated by this AC problem with an affine bound function can be made as close as one wants to the value of the singular problem of L\o kka \& Zervos (it suffices to take the slope as large as necessary). This is illustrated in Section~\ref{sec:numerical}.

It is well known that the addition of a penalty-at-ruin component in the performance function does not offer any new mathematical challenge. However, it allows us to provide a unified representation of the value functions for the two subproblems; see Equations~\eqref{Jd-compact}-\eqref{Jc-compact} in Section~\ref{subsec:unified}. As a consequence, we are also able to formulate the dichotomy in terms of this parameter, instead of just the cost of injections as in all other studies on similar problems; see Corollary~\ref{cor:sol-main}.

\subsection{Formulation of the main optimization problem}\label{sec:framework}

Let us consider a filtered probability space $(\Omega, \F, \mathbb{F} = (\F_t)_{t\geq 0}, \p)$ where $\mathbb{F}$ supports a standard Brownian motion $B = (B_t)_{t\geq 0}$ and satisfies the usual conditions. We suppose that the (uncontrolled) surplus process evolves according to an arithmetic Brownian motion $X=(X_t)_{t\geq 0}$ given by
\begin{equation}
    X_t = X_0 + \mu t + \sigma B_t, \quad t\geq 0,
\end{equation}
where $\mu,\sigma>0$ are the model parameters. It is well known that, for a twice continuously differentiable function $f$, the infinitesimal generator of $X$ is given by
\begin{equation}
    \mathcal{A}f(x) = \mu f^\prime(x) + \frac{1}{2}\sigma^2f^{\prime \prime}(x) .
\end{equation}

As dividends can be deducted and capital injections can be made, a strategy is given by a pair of adapted processes $\pi = (\ell^\pi, G^\pi)$, where $\ell^\pi=(\ell^\pi_t)_{t\geq 0}$ is the dividend rate process, which is such that the amount of dividend payments made up to time $t$ is given by $L^\pi_t = \int_0^t \ell^\pi_s\diff s$, and where $G^\pi=(G^\pi_t)_{t\geq 0}$ is the capital injection process such that $G^\pi_t$ is the amount of capital injections made up to time $t$. The corresponding surplus process is then given by
\begin{equation}
    \diff X^\pi_t = (\mu-\ell^\pi_t)\diff t + \sigma \diff B_t + \diff G^\pi_t, \quad t\geq 0,
\end{equation}
and the corresponding ruin time by
\begin{equation}
    \tau^\pi = \inf\{t \geq 0 \mid X^\pi_t < 0\}.
\end{equation}
Note that in this problem, for a given $\pi$, the ruin time $\tau^\pi$ can be infinite with probability one or it can be finite with a positive probability.

Finally, as alluded to above, the set of admissible strategies is, for fixed $K,S \geq 0$, given by
\begin{align}
    \Pi = \{\pi = (\ell^\pi, G^\pi) &\mid 0 \leq \ell^\pi_t \leq KX^\pi_t + S \text{ for all $t\geq 0$} \\
    &\qquad \text{and $G^\pi$ is càdlàg  nondecreasing with $G^\pi_{0-} = 0$}\}.
\end{align}
On this set, we seek to maximize the following performance function: for $\pi \in \Pi$,
\begin{equation}\label{J-gen}
    J(x;\pi) = \E_x \left[\int_0^{\tau^\pi}\e^{-qt}\left(\ell^\pi_t \diff t - \beta\diff G^\pi_t\right) - P\e^{-q\tau^\pi} \right] , \quad x \geq 0 ,
\end{equation}
where $q>0$ is the discount rate, $\beta >1$ is the cost of capital injections, and $P\geq 0$ is the penalty-at-ruin parameter. We will refer to $K,S,q,\beta,P$ as the problem parameters. Then, the goal is to compute the value function
\begin{equation}
    V(x) = \sup_{\pi\in\Pi}J(x;\pi) , \quad x\geq 0 ,
\end{equation}
and find an optimal strategy (if it exists), i.e., a strategy $\pi^*\in\Pi $ such that $V(x) = J(x;\pi^*)$ for all $x\geq 0$.

\begin{remark}
Note that, in the above performance function, $\E_x$ stands for the expectation with respect to the probability measure $\p_x$ associated with the initial surplus $X^\pi_{0-} = x$.
\end{remark}

Roughly speaking, the solution to this optimization problem is given by the following dichotomy (see \cite{renaud-et-al_2023}): there exists a critical value $\beta^*$ such that
\begin{itemize}
\item if the cost of capital injections $\beta$ is larger than $\beta^*$, then an optimal strategy consists in paying dividends at the maximal rate, whenever the surplus exceeds an optimal threshold, and never injecting capital,
\item if the cost of capital injections $\beta$ is smaller than $\beta^*$, then an optimal strategy consists in paying dividends at the maximal rate, whenever the surplus exceeds another optimal threshold, and always injecting capital, only to keep the surplus process from going below zero.
\end{itemize}

A precise statement and a new proof of this dichotomy are provided in Section~\ref{sec:sol-main}.

The rest of the paper is organized as follows. Section~\ref{sec:sub-problems} contains known results about the two optimization subproblems mentioned earlier; it serves mainly to set the notation. The main contributions of this paper are in Section~\ref{sec:sol-main}, while in Section~\ref{sec:numerical} numerical analyses are presented to illustrate those results.

%%%%%%%%%%%%%%%%%%%%%%%%%%%%%%%%%%%%%%%%%
%%%%%%%%%%%%%%%%%%%%%%%%%%%%%%%%%%%%%%%%%
%%%%%%%%%%%%%%%%%%%%%%%%%%%%%%%%%%%%%%%%%
\section{Two optimization (sub)problems}\label{sec:sub-problems}

Now, we have to spend some time presenting the solutions to the two subproblems appearing in the dichotomous solution of the main optimization problem. Recall that they are known results already available in the literature.

Let us consider the following subsets of strategies:
\begin{equation}
    \Pi_d = \{\pi\in\Pi \mid G^\pi \equiv 0\} \quad \text{and} \quad \Pi_c = \{\pi\in\Pi \mid X^\pi_t \geq 0 \text{ for all } t\geq 0\} .
\end{equation}
The first subset contains strategies in which no capital injections are made, while the second subset contains those in which capital injections must be made (if needed) to keep the surplus process from getting ruined. Let us define the corresponding value functions by
\begin{equation}
    V_d(x) = \sup_{\pi\in\Pi_d} J(x;\pi) \quad \text{and} \quad V_c(x) = \sup_{\pi\in\Pi_c}J(x;\pi) ,
\end{equation}
in which, for each $\pi\in\Pi_d$, the performance function in~\eqref{J-gen} becomes
\begin{equation}
    J(x;\pi) = \E_x\left[\int_0^{\tau^\pi}\e^{-qt}\ell^\pi_t \diff t - P\e^{-q\tau^\pi} \right] ,
\end{equation}
while, for each $\pi\in\Pi_c$, it becomes
\begin{equation}
    J(x;\pi) = \E_x\left[\int_0^\infty \e^{-qt}\left(\ell^\pi_t \diff t - \beta\diff G^\pi_t\right)\right] ,
\end{equation}
since for such strategies we have $\tau^\pi = \infty$.

Note that, because $\Pi_d \subset \Pi$ and $\Pi_c \subset \Pi$, we have
\begin{equation}
    V(x) \geq \max\{V_d(x), V_c(x)\},\quad x\geq 0.
\end{equation}

Next, let us define \textit{special functions}, related to an arithmetic Brownian motion and an Ornstein-Uhlenbeck process, through Laplace transforms of first hitting times; the exact analytical expressions are given in Appendix~\ref{app:scalefunctions}. If $Y_t = x + \mu t + \sigma B_t$, then for $x\in [0,b]$
\begin{equation}
\E_x\left[\e^{-q\nu^b}\ind_{\nu^b < \nu^0}  \right] = \frac{\Wq(x)}{\Wq(b)} ,
\end{equation}
while, if $Y$ is such that $\diff Y_t = (\mu - (KY_t + S)) \diff t + \sigma \diff B_t$ with $Y_0 = x \geq b$, then
\begin{equation}
\E_x\left[\e^{-q\nu^b}\ind_{\nu^b<\infty} \right] = \frac{\Hq(x)}{\Hq(b)} .
\end{equation}
In each case, we used the notation $\nu^c = \inf\{t\geq 0 \mid Y_t = c\}$, for $c \in \reals$.

Here is the solution to the first subproblem, which is essentially lifted from \cite{renaud-simard_2021, rao_2023, fendri_2025}. Its proof is merely an exercise, so it is left to the reader.

\begin{theorem}\label{thm:sol-Jd}
    If
    \begin{equation}\label{cond-params-bd}
        \frac{q}{q+K}\frac{\Hq(0)}{\Hqprime(0)} + \ettaz + P > 0,
    \end{equation}
    then there exists a unique $b_d>0$ such that
    \begin{equation}\label{Vd-proba}
        V_d(x) = \E_x \left[ \int_0^{\tau_d} \mathrm{e}^{-qt} (KX^{b_d}_t +S) \ind_{X^{b_d}_t \geq b_d} \diff t - P\e^{-q\tau_d}\right]        
    \end{equation}
    with $\tau_d = \inf\{t\geq 0\mid X^{b_d}_t < 0\}$, where $X^{b_d}$ is the solution to
    \begin{equation}
        \diff X^{b_d}_t = \left(\mu - (K X^{b_d}_t + S) \ind_{X^{b_d}_t \geq b_d} \right) \diff t + \sigma \diff B_t .
    \end{equation}
    
    Otherwise, if the condition in Equation~\eqref{cond-params-bd} fails, then we have
    \begin{equation}\label{Vd-bd0-proba}
        V_d(x) = \E_x \left[ \int_0^{\tau_d} \mathrm{e}^{-qt} (KX'_t +S) \diff t - P\e^{-q\tau_d}\right]
    \end{equation}
    with $\tau_d = \inf\{t\geq 0\mid X'_t < 0\}$, where $X'$ is the solution to
    \begin{equation}
        \diff X'_t = \left(\mu - (K X'_t + S) \right) \diff t + \sigma \diff B_t .
    \end{equation}
\end{theorem}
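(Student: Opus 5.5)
Our approach is the standard ``guess and verify'' scheme for mean-reverting (refracted) threshold strategies. For $b\geq 0$ we write $J_b(x)$ for the performance of the threshold strategy: pay at the maximal rate $Kx+S$ above $b$, pay nothing below $b$, and never inject capital. The plan has three steps: compute $J_b$ explicitly, pick $b$ by smooth fit, and verify optimality with a verification lemma.

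\textbf{Step 1: explicit form of $J_b$.} Use the strong Markov property at the hitting times of $b$, together with the two Laplace-transform identities (the ones giving $\Wq(x)/\Wq(b)$ and $\Hq(x)/\Hq(b)$).
\begin{itemize}
\item On $[0,b]$, take $J_b(x)=C_b\Wq(x)+D_b\,Z(x)$. Here $Z$ is the complementary $q$-harmonic function of $\mathcal A$, chosen so that $J_b(0)=-P$.
\item On $[b,\infty)$, take $J_b(x)=\frac{S}{q}+\frac{K}{q+K}\bigl(x+\frac{\mu-S}{q}\bigr)+E_b\Hq(x)$. The affine part is the particular solution of
\begin{equation}
(\mathcal A-q)f-(Kx+S)f'+Kx+S=0 ,
\end{equation}
and it is exactly the affine expression that appears in~\eqref{cond-params-bd}.
\item Continuity and $C^1$-pasting at $b$ (which hold automatically for the value of a diffusion with a discontinuous drift) determine $C_b,D_b,E_b$.
\end{itemize}

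\textbf{Step 2: choose $b_d$ by smooth fit.} Choose $b$ so that $J_b'(b)=1$. Differentiating and using the Wronskian-type relations for $\Wq$ and $\Hq$, this reduces to a scalar equation $g(b)=0$. I expect $g$ to be strictly monotone with $g(\infty)$ of a definite sign. Then:
\begin{itemize}
\item a unique root $b_d>0$ exists iff $g(0)$ has the opposite sign;
\item evaluating $g(0)$ means putting $b=0$ in the upper expression with $J(0)=-P$. This gives $E_0=-\bigl(\ettaz+P\bigr)/\Hq(0)$, and the slope condition at $0$ becomes exactly~\eqref{cond-params-bd}, after multiplying by $\Hq(0)/\Hqprime(0)<0$ and noting $\Hqprime<0$.
\end{itemize}
When~\eqref{cond-params-bd} fails, we take $b_d=0$, i.e.\ the strategy $X'$ of~\eqref{Vd-bd0-proba}.

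\textbf{Step 3: verification.} Show that the candidate $v=J_{b_d}$ is $C^1$, piecewise $C^2$, concave, and satisfies the HJB variational inequality
\begin{equation}
\max_{0\leq \ell\leq Kx+S}\bigl\{(\mathcal A-q)v(x)+\ell(1-v'(x))\bigr\}=0,\qquad v(0)=-P .
\end{equation}
This follows from $v'\geq 1$ on $[0,b_d]$ and $v'\leq 1$ on $[b_d,\infty)$, which in turn follows from concavity and $v'(b_d)=1$. Concavity comes from the log-convexity of $\Hq$ and the convexity of $\Wq$ on the relevant ranges; this is the classical argument of \cite{renaud-simard_2021}. Then, for arbitrary $\pi\in\Pi_d$:
\begin{itemize}
\item apply It\^o's formula to $\e^{-q(t\wedge\tau^\pi)}v(X^\pi_{t\wedge\tau^\pi})$ (Meyer--It\^o at $b_d$ is unnecessary since $v\in C^1$ and piecewise $C^2$);
\item localize, use the HJB inequality to get $v(x)\geq \E_x[\int_0^{t\wedge\tau^\pi}\e^{-qs}\ell_s\diff s+\e^{-q(t\wedge\tau^\pi)}v(X_{t\wedge\tau^\pi})]$;
\item let $t\to\infty$ using the linear growth of $v$ and $v(X_{\tau^\pi})=v(0)=-P$, since the paths are continuous.
\end{itemize}
This gives $v\geq V_d$; equality holds because $v$ is attained by the threshold strategy. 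In the failing case the same argument applies with $b_d=0$, since then $v'\leq 1$ everywhere.

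The main obstacle is Step 2 combined with the concavity in Step 3: proving that $g$ is monotone, and that the failure of~\eqref{cond-params-bd} forces $v'\leq 1$ on all of $[0,\infty)$. We would handle both by rewriting $v'$ above $b$ as $\frac{K}{q+K}+E\Hqprime$ and using that $\Hqprime/\Hq$ is increasing.
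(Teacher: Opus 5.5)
Your overall scheme (explicit $J_b$, smooth fit $J_b'(b)=1$, concavity, HJB, verification) is the one the paper relies on through its references; the paper itself leaves the proof as an exercise, with $b_d$ the root of $F$ in \eqref{def-F}. The gap is the step you flag as the main obstacle, and the tactic you propose for it cannot work, because strict monotonicity is false for the natural choices of $g$.

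Take $g=F$ first. The condition \eqref{cond-params-bd} says exactly that $F(0+)>0$, and $F(b)\to\frac{\mu}{q}-\frac{\Delta+\mu}{2q}=\frac{\mu-\Delta}{2q}<0$. Yet $F$ is eventually strictly increasing. For $P=0$,
\[
F'(b)=\frac{\Wq(b)\,W^{(q)\prime\prime}(b)}{\Wqprime(b)^2}-\frac{q}{q+K}\,\frac{\Hq(b)\,H^{(q)\prime\prime}_{K,S}(b)}{\Hqprime(b)^2} ,
\]
and for $P>0$ the extra term is exponentially small. The first term is $1-O(\e^{-2\Delta b/\sigma^2})$. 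The integral representation of $\mathrm{D}_{-q/K}$ makes $\Hq$, after an affine change of variable, a Laplace transform of a positive measure. Watson's lemma then shows the second term is $1-\frac{\sigma^2}{2Kb^2}(1+o(1))$. Hence $F'>0$ for large $b$, so $F$ is not monotone.

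If instead $g(b)=J_b'(b)-1$, then $g(b)=\frac{\Hqprime(b)}{\Hq(b)}\bigl(J_b(b)-h(b)\bigr)$ with $h(b)=\frac{q}{q+K}\frac{\Hq(b)}{\Hqprime(b)}+\ettap{b}$. Here $g(b)\to0^-$, which is incompatible with strict monotonicity when $g(0+)>0$. So neither uniqueness nor ``a root exists iff $g(0)$ has the opposite sign'' follows your way.

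Your concavity claim is also not covered by the classical argument once $P>0$. On $[0,b_d]$ the candidate is $-P\frac{\Wq(x-b_d)}{\Wq(-b_d)}+c\,\Wq(x)$, not a multiple of $\Wq$. Even for $P=0$, what is needed is concavity, not convexity, of $\Wq$ on $[0,b_d]$.

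Monotonicity is in fact unnecessary.

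\textbf{Existence.} $F$ is continuous on $(0,\infty)$ with $F(0+)>0>F(\infty)$, so it has a root.

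\textbf{Concavity at every root $b>0$.}
\begin{itemize}
\item Above $b$: $v''(x)=\frac{q}{q+K}H^{(q)\prime\prime}_{K,S}(x)/\Hqprime(b)<0$.
\item On $[0,b]$: $v''=a_1\e^{\theta_1x}+a_2\e^{\theta_2x}$ with $\theta_{1,2}=(-\mu\pm\Delta)/\sigma^2$. Such a function is of one sign or strictly monotone, so it suffices that $v''(b)<0$ and $v''(0)<0$.
\item $v''(b)<0$ follows from the $C^2$ fit.
\item $v''(0)<0$ follows from $\frac12\sigma^2v''(0)=-qP-\mu v'(0)$ together with
\[
v'(0)=\frac{\Wqprime(0)}{\Wqprime(b)}+\frac{P\bigl(\Wqprime(-b)\Wqprime(b)-\Wqprime(0)^2\bigr)}{|\Wq(-b)|\,\Wqprime(b)}>0 .
\]
The bracket is positive because $\Wqprime(-b)\Wqprime(b)-\Wqprime(0)^2=\frac{-2\theta_1\theta_2}{\Delta^2}\bigl(\cosh(2\Delta b/\sigma^2)-1\bigr)>0$.
\end{itemize}

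\textbf{Uniqueness.} Your verification step now gives $J_b=V_d$ for every root $b$. Conversely, suppose $J_b=V_d$ for some $b>0$. Since $V_d$ is $C^2$, the jump relation $\frac12\sigma^2\bigl(J_b''(b+)-J_b''(b-)\bigr)=-(Kb+S)\bigl(1-J_b'(b)\bigr)$ forces $V_d'(b)=1$. Strict concavity of $V_d$ then gives uniqueness of both the root and the optimal threshold.

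\textbf{Failing case.} Your treatment is fine once you record $\ettaz+P>0$ and $H^{(q)\prime\prime}_{K,S}>0$. These give $v'(x)\leq v'(0)\leq 1$ for all $x\geq0$.
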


It is known that the value function $V_d \colon [0,\infty) \to \reals$ is a concave function and that it is a solution to the following HJB equation:
\begin{equation}\label{HJB-d}
    (\mathcal{A}-q)f(x) + \sup_{0\leq\lambda\leq Kx+S}\left[\lambda(1-f'(x))\right] = 0, \quad x > 0,
\end{equation}
with boundary condition
\begin{equation}\label{bound-cond-d}
    f(0) = - P.
\end{equation}

Here is the solution to the second subproblem, taken from \cite{renaud-et-al_2023, mastromonaco_2025}.

\begin{theorem}\label{thm:sol-Jc}
    There exists a unique $b_c>0$ such that
    \begin{equation}\label{Vc-proba}
        V_c(x) = \E_x \left[\int_0^\infty \e^{-qt}\left((K Y^{b_c}_t + S)\ind_{Y^{b_c}_t\geq b_c} \diff t - \beta\diff G^{b_c}_t\right)\right],        
    \end{equation}
    where $(Y^{b_c},G^{b_c})$ is the solution to the Skorokhod problem
    \begin{align*}
        \diff Y^{b_c}_t &= \left(\mu - (K Y^{b_c}_t + S)\ind_{Y^{b_c}_t\geq b_c}\right)\diff t + \sigma \diff B_t + \diff G^{b_c}_t ,\\
            G^{b_c}_t &= \int_0^t \ind_{Y^{b_c}_s = 0} \diff G^{b_c}_s .
    \end{align*}
\end{theorem}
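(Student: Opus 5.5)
We prove Theorem~\ref{thm:sol-Jc} with a guess-and-verify argument: build candidate functions from the special functions $\Wq$ and $\Hq$, pick the threshold by smooth fit, and verify optimality with It\^o's formula.

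\textbf{Step 1: the candidate value of a threshold strategy.} For a threshold $b>0$, let $f_b(x)$ denote the right-hand side of \eqref{Vc-proba} with $b_c$ replaced by $b$. On $[0,b)$ the controlled process is a Brownian motion with drift $\mu$, reflected at $0$ at cost $\beta$ per unit. On $[b,\infty)$ it is the Ornstein--Uhlenbeck-type diffusion driven by the rate $KY+S$. The function $f_b$ should therefore satisfy
\begin{align*}
(\mathcal A - q) f_b &= 0 \quad \text{on } (0,b), & f_b'(0) &= \beta,\\
(\mathcal A - q) f_b + Kx+S - (Kx+S) f_b' &= 0 \quad \text{on } (b,\infty), & &
\end{align*}
together with $C^1$ pasting at $b$ and linear growth at infinity.

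On $(0,b)$ the solution is a combination of $\Wq$ and its companion $Z^{(q)}$; equivalently of $\e^{\theta_\pm x}$. On $(b,\infty)$ it is a particular affine solution plus a multiple of $\Hq$. One checks that
\[
\ettap{x}
\]
solves the inhomogeneous equation: with $f' = K/(q+K)$ we get $(\mathcal A - q)f = \tfrac{\mu K}{q+K} - qf$, and matching terms confirms it. Boundedness of the non-affine part selects $\Hq$ as the homogeneous solution. The three free constants are then fixed by $f_b'(0)=\beta$ and $C^1$ fit at $b$.

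\textbf{Step 2: choosing $b_c$ by smooth fit.} We choose $b=b_c$ so that $f_b'(b)=1$, which makes $f_b$ of class $C^2$ at $b$. Writing $f_b'$ on $[0,b]$ via $\beta\,\Wqprime(x)/\Wqprime(0)$-type expressions, this becomes a scalar equation $g(b)=\beta$. Here $g(b)$ equals $1$ divided by the normalized slope propagated from $b$ back to $0$. Using the fact that $\Wqprime$ is convex and decreasing then increasing, together with the explicit ratio $\Hq/\Hqprime$, we show that $g(0+)<\beta$ because $\beta>1$, while $g(b)\to\infty$. We also show that $g$ is strictly increasing. This yields existence and uniqueness of $b_c>0$.

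\textbf{Step 3: verification.} We show that $f:=f_{b_c}$ is concave, so that $f'\geq1$ on $[0,b_c]$ and $f'\leq1$ beyond $b_c$, and that $f'\leq\beta$ everywhere, with $f'(0)=\beta$ and concavity giving the latter. Then $f$ solves
\[
\max\Bigl\{(\mathcal A - q) f + \sup_{0\le\lambda\le Kx+S}\lambda(1-f'),\ f'-\beta\Bigr\}=0.
\]
For an arbitrary $\pi\in\Pi_c$ we apply the It\^o--Meyer formula to $\e^{-qt}f(X^\pi_t)$ and localize. The $\diff G^\pi$ terms are bounded by $\beta\,\diff G^\pi$, including jumps, since $f'\le\beta$. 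Taking limits, justified by the linear growth of $f$ and by $\E\int \e^{-qt}\diff G^\pi<\infty$ (otherwise $J=-\infty$), gives $J(x;\pi)\le f(x)$. Equality holds for the threshold strategy, whose existence follows from strong existence for SDEs with bounded-measurable discontinuous drift and Skorokhod reflection.

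The main obstacle is concavity of $f$ on $(b_c,\infty)$, that is, $f''\le0$ there, which requires a sign property of $\Hq''$ relative to $\Hqprime$. I would first differentiate the ODE on $(b_c,\infty)$ to obtain an equation for $u=f'$, namely
\[
\tfrac12\sigma^2u''+(\mu-Kx-S)u'-(q+K)u+K=0,
\]
with $u(b_c)=1$ and $u(\infty)=K/(q+K)<1$. A maximum-principle argument then shows that $u$ is monotone between these values. The monotonicity of $g$ in Step~2 is a secondary difficulty, to be handled through the convexity of $\Wqprime$.
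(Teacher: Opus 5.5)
Your guess-and-verify plan matches the route behind Theorem~\ref{thm:sol-Jc}, which the paper does not reprove but imports from \cite{renaud-et-al_2023, mastromonaco_2025}: a candidate built from $\Wq$ and $\Hq$, a threshold fixed by the $C^2$ smooth-fit condition $f'(b_c)=1$ (exactly the paper's characterization of $b_c$ as the root of $G$ in \eqref{def-G}), then concavity and verification using $f'\leq\beta$. Two small refinements: concavity on $(b_c,\infty)$ is immediate from $f''=\frac{q}{q+K}H^{(q)\prime\prime}_{K,S}/\Hqprime(b_c)<0$ (convexity of $\Hq$ is visible from the integral representation in Appendix~\ref{app:scalefunctions}), whereas concavity on $[0,b_c]$ does not follow from $f'(0)=\beta>1=f'(b_c)$ alone and needs $f''(b_c-)=f''(b_c+)<0$ plus your maximum-principle argument; and since this works at every smooth-fit root, each root gives $V_c$ by verification, so two roots would force $f'\equiv 1$ on an interval, which makes the strict monotonicity of $g$ unnecessary for uniqueness.
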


It is known that the value function $V_c \colon [0,\infty) \to \reals$ is a concave function and that it is a solution to the following HJB equation:
\begin{equation}\label{HJB-c}
    (\mathcal{A}-q)f(x) + \sup_{0\leq\lambda\leq Kx+S}\left[\lambda(1-f'(x))\right] = 0, \quad x > 0,
\end{equation}
with boundary condition
\begin{equation}\label{bound-cond-c}
    f'(0) = \beta.
\end{equation}

%%%%%%%%%%%%%%%%%%%%%%
%%%%%%%%%%%%%%%%%%%%%%
%%%%%%%%%%%%%%%%%%%%%%
\section{Solution to the main optimization problem}\label{sec:sol-main}

In this section, we present our more elementary derivation of the solution to the main optimization problem. First, in Section~\ref{subsec:unified}, we unify the analytical representations of the two performance functions related to the subproblems studied previously, this unification is instrumental in our approach. Then, in Section~\ref{subsec:J-prop}, we derive some analytical properties of these two functions that will be useful in Section~\ref{subsec:result}.

\subsection{A unified representation of the value functions}\label{subsec:unified}

For a given $b\geq 0$, let us consider the process $X^b$ defined by
\begin{equation}
    \diff X^b_t = \left(\mu - (KX^b_t + S)\ind_{X^b_t\geq b}\right)\diff t + \sigma \diff B_t, \quad t\geq 0 ,
\end{equation}
and the pair of processes $(Y^b, G^b)$ which is a solution to the following Skorokhod problem (see, e.g., Lemma~3.6.14 in \cite{karatzas-shreve_1991}):
\begin{align}
    \diff Y^b_t &= \left(\mu - (KY^b_t + S)\ind_{Y^b_t\geq b}\right)\diff t + \sigma \diff B_t + \diff G^b_t, \quad t\geq 0 ,\\
    G^b_t &= \int_0^t \ind_{Y^b_s = 0} \diff G^b_s .
\end{align}
Note that $Y^b_t \geq 0$ for all $t\geq 0$.

Define
\begin{equation}
    \ell^b_t = (KX^b_t + S)\ind_{X^b_t\geq b} \quad \text{and} \quad \underline \ell^b_t = (KY^b_t + S)\ind_{Y^b_t\geq b}.    
\end{equation}
In what follows, the (dividend-injection) strategies $\pi^b = (\ell^b, 0) \in \Pi_d$ and $\underline{\pi}^b = (\underline \ell^b, G^b) \in \Pi_c$ will be referred to as the \textit{pure-dividend mean-reverting strategy at level $b$} and the \textit{bailout-dividend mean-reverting strategy at level $b$}, respectively.

\begin{remark}
    Note that the optimal strategy in Theorem~\ref{thm:sol-Jd} is the pure-dividend mean-reverting strategy $\pi^{b_d} = (\ell^{b_d}, 0)$ while the optimal strategy in Theorem~\ref{thm:sol-Jc} is the bailout-dividend mean-reverting strategy $\underline \pi^{b_c} = (\underline \ell^{b_c}, G^{b_c})$. 
\end{remark}

\begin{remark}
    In \cite{renaud-et-al_2023}, a strategy $\underline{\pi}^b$ is called a \textit{reflected mean-reverting dividend strategy}. We have decided to use the word \textit{bailout} instead because it is more frequently used in the literature to designate ruin-preventing capital injections made each time the process reaches zero.
\end{remark}

Next, we slightly modify the notation of the performance function to emphasize the role played by the threshold when considering mean-reverting strategies. As mentioned in Section~\ref{sec:sub-problems}, for each $\pi^b = (\ell^b, 0)$ we have
\begin{equation}
    J(x;\pi^b) = J_d(x; b) := \E_x\left[\int_0^{\tau^b} \e^{-qt} \ell^b_t \diff t - P \e^{-q\tau^b} \right] ,
\end{equation}
where $\tau^b := \tau^{\pi^b}$, and for each $\underline \pi^b = (\underline \ell^b, G^b)$ we have
\begin{equation}
    J(x;\underline \pi^b) = J_c(x; b) := \E_x\left[\int_0^\infty \e^{-qt} \left( \underline \ell^b_t \diff t - \beta \diff G^b_t\right)\right] .
\end{equation}

Now, let us further the analysis of the performance functions $J_d(\cdot; b)$ and $J_c(\cdot; b)$. In this direction, let us define the expectations
\begin{equation}
\Phi(x;b) = \E_x\left[\e^{-q \tau^b}\right] \quad \text{and} \quad R(x;b) = \E_x\left[\int_0^{\tau^b}\e^{-qt}(KX^b_t + S)\ind_{X^b_t\geq b}\diff t\right], \quad x\geq 0 ,
\end{equation}
It is known (see \cite{mastromonaco_2025, renaud-et-al_2023}) that
\begin{equation}
    \Phi(x;b) =
    \begin{cases}
        \frac{\Wq(x-b)}{\Wq(-b)} +\Phi(b;b)\frac{\Wq(x)}{\Wq(b)}, &0\leq x< b, \\
        \hfil \Phi(b;b)\frac{\Hq(x)}{\Hq(b)}, &0\leq b \leq x,
    \end{cases}    
\end{equation}
where
\begin{equation}
    \Phi(b;b) = 
    \begin{cases}
        \hfil 1, &b = 0, \\
        -\frac{\Wqprime(0)/\Wq(-b)}{\frac{\Wqprime(b)}{\Wq(b)}-\frac{\Hqprime(b)}{\Hq(b)}}, &b>0,
    \end{cases} 
\end{equation}
and it is known (see \cite{renaud-simard_2021, rao_2023}) that
\begin{equation}
    R(x;b) =
    \begin{cases}
        \hfil R(b;b)\frac{\Wq(x)}{\Wq(b)}, &0\leq x< b, \\
        \ettap{x}+ \left( R(b;b) - \ettam{b} \right) \frac{\Hq(x)}{\Hq(b)}, &0\leq b \leq x,
    \end{cases}     
\end{equation}
where
\begin{equation}
    R(b;b) = 
    \begin{cases}
        \hfil 0, &b = 0, \\
        \frac{\frac{K}{q+K} - \left[\ettam{b}\right] \frac{\Hqprime(b)}{\Hq(b)}}{\frac{\Wqprime(b)}{\Wq(b)}-\frac{\Hqprime(b)}{\Hq(b)}}, &b>0.
    \end{cases}
\end{equation}

Since $X^b$ and $(Y^b, G^b)$ are Markovian processes, one can use the strong Markov property to obtain the following compact and unified expressions for $J_d(\cdot; b)$ and $J_c (\cdot; b)$: for $x\geq 0$,
\begin{equation}\label{Jd-compact}
J_d(x;b) = R(x;b) - P\Phi(x;b)
\end{equation}
and
\begin{equation}\label{Jc-compact}
    J_c(x;b) = R(x;b) + J_c(0;b)\Phi(x;b) .
\end{equation}

\begin{remark}
In both~\eqref{Jd-compact} and~\eqref{Jc-compact}, the first term is the present value of dividends until the surplus hits zero, while the second term is the residual value at ruin. In~\eqref{Jd-compact}, this second term is the discounted value of the penalty at ruin, while in~\eqref{Jc-compact} it is the discounted value (function) when the surplus restarts at zero. However, note that, in the latter case, capital injections keep the surplus afloat.
\end{remark}

Using the expressions for $R$ and $\Phi$, we can further write
\begin{equation}\label{Jd-byparts}
    J_d(x;b) =
    \begin{cases}
        \hfil -P\frac{\Wq(x-b)}{\Wq(-b)} + J_d(b;b)\frac{\Wq(x)}{\Wq(b)}, &0\leq x < b, \\
        \ettap{x} + \left(J_d(b;b) - \ettam{b} \right) \frac{\Hq(x)}{\Hq(b)}, &0\leq b \leq x,
    \end{cases}
\end{equation}
where
\begin{equation}
J_d(b;b) =
    \begin{cases}
        \hfil -P, &b = 0, \\
        \frac{\frac{K}{q+K} - \left[\ettam{b}\right]\frac{\Hqprime(b)}{\Hq(b)} + P \frac{\Wqprime(0)}{\Wq(-b)}}{\frac{\Wqprime(b)}{\Wq(b)}-\frac{\Hqprime(b)}{\Hq(b)}}, &b>0.
    \end{cases}
\end{equation}
Similarly,
\begin{equation}
    J_c(x;b) =
    \begin{cases}
        \hfil J_c(0;b)\frac{\Wq(x-b)}{\Wq(-b)} + J_c(b;b)\frac{\Wq(x)}{\Wq(b)}, &0\leq x < b, \\
        \ettap{x} + \left(J_c(b;b) - \ettam{b}\right)\frac{\Hq(x)}{\Hq(b)}, &0\leq b \leq x,
    \end{cases}
\end{equation}
where
\begin{equation}
    J_c(0;b) = \frac{\beta-\delx R(0;b)}{\delx\Phi(0;b)}, \;\; b\geq 0, \quad \text{and} \quad J_c(b;b) = \frac{\beta\Phi(b;b) + R(b;b)\frac{\Wqprime (-b)}{\Wq(-b)}}{\delx\Phi(0;b)}, \;\; b>0.
\end{equation}

\subsection{Analytical properties of the performance functions}\label{subsec:J-prop}

It is easy to see that both $J_d(\cdot;b)$ and $J_c(\cdot;b)$ are continuously differentiable on $(0,\infty)$, and twice continuously differentiable on $(0,\infty) \setminus \{b\}$. From Theorem~\ref{thm:sol-Jd} and Theorem~\ref{thm:sol-Jc}, we know there exist unique thresholds $b_d\geq 0$ and $b_c > 0$ such that, for all $x\geq 0$,
\[
V_d(x) = J_d(x;b_d) \quad \text{and} \quad V_c(x) = J_c(x;b_c) .
\]
The optimal thresholds $b_d$ and $b_c$ are characterized as the levels for which the (optimal) performance functions $J_d (\cdot;b_d)$ and $J_c (\cdot;b_c)$ are twice continuously differentiable on $(0,\infty)$. Analytically, they are the unique roots of $F$ and $G$, which are given explicitly in Equations~\eqref{def-F}-\eqref{def-G}; in other words, we have $F(b_d) = 0$ and $G(b_c)=0$.

Here is a new result in the analysis of the optimal thresholds. We write $b_d(P)$ and $b_c(\beta)$ to highlight the dependence on $P$ and $\beta$.
\begin{proposition}\label{prop:seuils-croiss}
    Suppose  $P^\prime$ satisfies equation \eqref{cond-params-bd} with $P<P^\prime$, then $b_d(P)<b_d(P^\prime)$. Suppose $1< \beta < \beta^\prime$, then $b_c(\beta)<b_c(\beta^\prime)$.
\end{proposition}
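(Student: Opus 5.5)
The plan is to use the characterizations $F(b_d)=0$ and $G(b_c)=0$ from \eqref{def-F}--\eqref{def-G}, and to read them as smooth-fit conditions. At the optimal threshold, $J_d(\cdot;b)$ is $C^2$ at $b$, and the HJB equation \eqref{HJB-d} forces $\partial_x J_d(b;b)=1$ there. Similarly for $J_c$.

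Since $J_d(x;b)=R(x;b)-P\Phi(x;b)$ is affine in $P$, I write
\[
F(b;P)=F_0(b)+P\,F_1(b),
\]
where $F_0$ collects the terms coming from $R$ and $F_1$ those coming from $-\Phi$. Concretely, I would take $F(b)$ proportional to $\partial_x J_d(b;b)-1$, or equivalently to the jump of $\partial_{xx}J_d(\cdot;b)$ at $b$. Using \eqref{Jd-byparts}, this is an explicit expression in $J_d(b;b)$, $W^{(q)}$ and $H^{(q)}_{K,S}$, and $J_d(b;b)$ itself is affine in $P$ with coefficient
\[
\frac{W^{(q)\prime}(0)/W^{(q)}(-b)}{\frac{W^{(q)\prime}(b)}{W^{(q)}(b)}-\frac{H^{(q)\prime}_{K,S}(b)}{H^{(q)}_{K,S}(b)}}.
\]

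Step one is to fix the sign convention of $F$ so that, under \eqref{cond-params-bd}, $F(\cdot;P)$ is positive on $(0,b_d(P))$ and negative beyond. This single crossing is exactly the content behind the uniqueness in Theorem~\ref{thm:sol-Jd}, and condition \eqref{cond-params-bd} is precisely $F(0^+;P)>0$.

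Step two is to show $F_1(b)>0$ for $b>0$. Probabilistically, $F_1$ is a positive multiple of $-\partial_x\Phi(b;b)$. The map $x\mapsto\Phi(x;b)=\E_x[\e^{-q\tau^b}]$ is strictly decreasing in $x$, and positivity follows from the explicit formula: $H^{(q)\prime}_{K,S}<0$, $W^{(q)\prime}>0$, and $\Phi(b;b)>0$. Then
\[
F(b_d(P);P')=(P'-P)\,F_1(b_d(P))>0,
\]
so by single crossing $b_d(P')>b_d(P)$. Strictness comes from the strict sign of $F_1$.

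For $b_c$ I argue the same way with $G(b;\beta)$. Since $J_c(0;b)=(\beta-\partial_xR(0;b))/\partial_x\Phi(0;b)$ is affine in $\beta$, $J_c(x;b)$ and hence $G$ are affine in $\beta$:
\[
G(b;\beta)=G_0(b)+\beta\,G_1(b), \qquad G_1(b)\ \text{proportional to}\ \frac{\partial_x\Phi(b;b)}{\partial_x\Phi(0;b)}.
\]
This ratio is positive because both derivatives are negative. Combined with the single-crossing structure of $G$ from Theorem~\ref{thm:sol-Jc}, this gives $b_c(\beta)<b_c(\beta')$.

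I expect the main obstacle to be step one: establishing rigorously the sign change of $F$ and $G$ across their unique roots, and aligning the sign convention with the sign of the $P$- and $\beta$-coefficients. If the explicit definitions make this awkward, my fallback is a comparison argument. For $b<b_d(P)$, the function $J_d(\cdot;b)$ satisfies $\partial_xJ_d(b;b)>1$, whereas for $b>b_d(P)$ it satisfies $\partial_xJ_d(b;b)<1$, which can be deduced from concavity of $V_d$ and the verification argument. I would then show that $\partial_x J_d(b;b)$ strictly increases in $P$, using the affine decomposition above.
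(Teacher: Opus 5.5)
Your proposal is correct and follows essentially the same route as the paper. Both arguments combine three facts: strict monotonicity of $F(b;P)$ in $P$ (your $F_1>0$, which can also be read off directly from \eqref{def-F} since $W^{(q)}(-b)<0$); strict monotonicity of $G(b;\beta)$ in $\beta$ through $J_c(0;b)$; and the single sign change of $F$ and $G$ at their unique roots, with $F(0+)>0$ and $G(0+)>0$.

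The one detail to add is the case where $P$ itself violates \eqref{cond-params-bd}. There $b_d(P)=0$ is not a root of $F(\cdot;P)$, so your identity $F(b_d(P);P')=(P'-P)F_1(b_d(P))$ does not apply, but $b_d(P)=0<b_d(P')$ holds trivially, as the paper notes.
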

\begin{proof}
    First, if $P$ does not satisfy \eqref{cond-params-bd}, then $b_d(P) = 0 < b_d(P')$. Suppose now that $P$ does satisfy \eqref{cond-params-bd}. We write $F(b;P)$ (for $F$ defined in~\eqref{def-F}) to emphasize its dependence on $P$. Since $P \mapsto F(b;P)$ is strictly increasing, then $F(b_d(P);P^\prime) > F(b_d(P);P) = 0$, where the equality comes from the definition of $b_d(P)$. Because $b_d(P^\prime)$ is unique and $b \mapsto F(b;P)$ is continuous, then $F(b;P^\prime)$ crosses zero once with respect to $b$ at $b = b_d(P^\prime)$. Thus, since $F(0+)>0$ due to the fact that~\eqref{cond-params-bd} is satisfied, then $F(b;P^\prime) > 0$ if and only if $b\in (0,b_d(P^\prime))$. Consequently, $b_d(P)$ lies in this interval, so that $b_d(P) < b_d(P^\prime)$.
        
    Next, let us write $G(b;\beta)$ (for the function $G$ defined in~\eqref{def-G}) to highlight its dependence on the parameter $\beta$. Note that $G$ decreases with respect to $J_c(0;b)$ and the latter is strictly decreasing in $\beta$, so that $\beta \mapsto G(b;\beta)$ is strictly increasing. Thus, $G(b_c(\beta);\beta^\prime) > G(b_c(\beta);\beta) = 0$, where the equality comes from the definition of $b_c(\beta)$. Because $b_c(\beta^\prime)$ is unique and $b \mapsto G(b;\beta)$ is continuous, then $G(b;\beta^\prime)$ crosses zero once at $b = b_c(\beta^\prime)$. Thus, since $G(0+)>0$ (cf.\ \cite{mastromonaco_2025}), then $G(b;\beta^\prime) > 0$ if and only if $b\in (0,b_c(\beta^\prime))$. In consequence, $b_c(\beta)$ lies in this interval, so that $b_c(\beta) < b_c(\beta^\prime)$.
\end{proof}

Next, let us consider the following technical but important lemma.
\begin{lemma}\label{lem-delxJd}
    We have that:
    \begin{enumerate}
        \item $\delx J_d(0;b_d) \geq \delx J_d(0;b)$, for all $b\geq 0$;
        \item for $b\geq 0$, $\delx J_d(0;b) =\beta$ if and only if $J_c(0;b) = -P$;
        \item if $J_c(0;b_c) < -P$ (resp.\ $>$ or $=$), then $b_d < b_c$ (resp.\ $>$ or $=$).
    \end{enumerate}
\end{lemma}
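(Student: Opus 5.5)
Three parts; the common tool is a linear identity linking $J_c$ to $J_d$ through the functions $R$ and $\Phi$.

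\emph{Part (1).} By \eqref{Jd-compact}, $J_d(x;b)=R(x;b)-P\Phi(x;b)$. Since $J_d(0;b)=-P=V_d(0)$ for every $b$, and $V_d\geq J_d(\cdot;b)$ on $[0,\infty)$, the difference $V_d(\cdot)-J_d(\cdot;b)$ vanishes at $0$ and is nonnegative. Hence its right derivative at $0$ is nonnegative:
\[
\delx J_d(0;b_d)=V_d'(0)\geq \delx J_d(0;b).
\]
This uses $V_d=J_d(\cdot;b_d)$ from Theorem~\ref{thm:sol-Jd}, with $b_d=0$ in the degenerate case. Differentiability at $0$ follows from the explicit $\Wq$, $\Hq$ expressions.

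\emph{Part (2).} Differentiate \eqref{Jd-compact} at $0$ to get $\delx J_d(0;b)=\delx R(0;b)-P\,\delx\Phi(0;b)$. Compare with the formula $J_c(0;b)=(\beta-\delx R(0;b))/\delx\Phi(0;b)$. The factor $\delx\Phi(0;b)$ is nonzero; it is negative, since $\Phi(0;b)=1$ and $\Phi\leq 1$ is decreasing near $0$, and this can be checked from the explicit form. Therefore
\[
J_c(0;b)=-P \iff \beta-\delx R(0;b)=-P\,\delx\Phi(0;b) \iff \delx J_d(0;b)=\beta .
\]
More generally, the same computation gives
\[
J_c(0;b)+P=\frac{\beta-\delx J_d(0;b)}{\delx\Phi(0;b)},
\]
so $J_c(0;b)<-P$ iff $\delx J_d(0;b)<\beta$, because $\delx\Phi(0;b)<0$.

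\emph{Part (3).} This is the main step. My plan is to treat $P$ as the parameter that moves, in the spirit of Proposition~\ref{prop:seuils-croiss}, and fix $\beta$ so that $b_c=b_c(\beta)$ is fixed.

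Define $P_0:=-J_c(0;b_c)$. Then $J_d(\cdot;b_c)$ computed with penalty $P_0$ satisfies $\delx J_d(0;b_c)=\beta$ by the identity above. The claim is that with penalty $P_0$ one has $b_d(P_0)=b_c$. At penalty $P_0$, both $J_d(\cdot;b_c)$ and $J_c(\cdot;b_c)$ solve the same ODE on $(0,b_c)$ and on $(b_c,\infty)$. They share the value $-P_0$ and the derivative $\beta$ at $0$, and both are $C^1$ at $b_c$. Uniqueness of the ODE solution then gives $J_d(\cdot;b_c)=J_c(\cdot;b_c)$. This function is $C^2$ at $b_c$ because $G(b_c)=0$, so $F(b_c;P_0)=0$ and hence $b_d(P_0)=b_c$. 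If $P_0$ violates \eqref{cond-params-bd}, then $F$ has no positive root, which contradicts $F(b_c)=0$; so this case does not occur.

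Now $J_c(0;b_c)<-P$ means $P<P_0$, and Proposition~\ref{prop:seuils-croiss} gives $b_d(P)<b_d(P_0)=b_c$. The reverse inequality gives $P>P_0$, whence $b_d>b_c$. Equality gives $P=P_0$, whence $b_d=b_c$.

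The delicate points are two: the matching of the $C^2$ conditions, which needs $F$ and $G$ from \eqref{def-F}--\eqref{def-G} to encode the same smooth-fit condition at $b$ given the boundary data at $0$; and the case $P<P_0$ in which $P$ fails \eqref{cond-params-bd}. That last case is handled by the convention $b_d=0<b_c$.
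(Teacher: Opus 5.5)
Your proposal is correct and follows the paper's proof essentially step for step. Parts (1) and (2) are the same arguments, and in (3) you, like the paper, set $P_0=-J_c(0;b_c)$, obtain $b_d(P_0)=b_c$ because $J_d(\cdot;b_c)$ at penalty $P_0$ coincides with $J_c(\cdot;b_c)$ and so satisfies smooth fit at $b_c$, and conclude with Proposition~\ref{prop:seuils-croiss}. The differences are minor: that coincidence is immediate from \eqref{Jd-compact}--\eqref{Jc-compact} without invoking ODE uniqueness, and the paper phrases smooth fit as $\delx J_d(b;b)=1$ rather than $F(b)=0$. Your unproved claim that $F$ has no positive root when \eqref{cond-params-bd} fails is not needed: at penalty $P_0$ one has $\delx J_d(0;b_c)=\beta>1$, whereas in the degenerate case $V_d'(0)\leq 1$, which contradicts part (1).
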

\begin{proof}
Since $V_d = J_d(\cdot;b_d)$ and since, for any $b\geq 0$, $J_d(\cdot;b)$ is the performance function of a strategy $\pi^b \in \Pi_d$, then we have $J_d(x;b_d) \geq J_d(x;b)$ for all $x \geq 0$ and any $b\geq 0$. As $J_d(0;b)=-P$ for any $b \geq 0$, then we have $\delx J_d(0;b_d) \geq \delx J_d(0;b)$. This proves the first assertion.
    
Now, using the unified analytical representations of $J_d$ and $J_c$ (see~\eqref{Jd-compact} and~\eqref{Jc-compact}), and using the fact that $\delx J_c(0;b)=\beta$ for any $b \geq 0$, we have
\begin{equation}
\delx J_d(0;b) = \delx R(0;b) - P \delx\Phi(0;b)
\end{equation}
and
\begin{equation}
\delx J_c(0;b) = \delx R(0;b) + J_c(0;b) \delx\Phi(0;b) = \beta ,
\end{equation}
from which the second result follows.
    
For the last assertion, let us prove first the case corresponding to an equality, i.e., if $J_c(0;b_c) = -P$, then $b_c=b_d$. Suppose that $J_c(0;b_c) = -P$. Then, by the second assertion, we have $\delx J_d(0;b_c) =\beta$. Note from Theorem~\ref{thm:sol-Jd} and Theorem~\ref{thm:sol-Jc} that $V^\prime_d(b_d) = 1 = V^\prime_c(b_c)$, so that $b_d$ and $b_c$ uniquely satisfy $\delx J_d(b_d;b_d) = 1 = \delx J_c(b_c;b_c)$. Thus,
\begin{equation}
\delx R(b_d;b_d) - P\delx\Phi(b_d;b_d) = \delx R(b_c;b_c) - P\delx\Phi(b_c;b_c),  
\end{equation}
where we used the assumption that $J_c(0;b_c) = -P$. Clearly, $b_d = b_c$ solves this equation.
    
Suppose now that $J_c(0;b_c) < -P$ and set $P^\prime := -J_c(0;b_c) > P$. Then, by the result shown in the previous paragraph, we have $b_d(P^\prime)=b_c > 0$. Furthermore, we have $b_d(P^\prime) > b_d(P)$ by Proposition~\ref{prop:seuils-croiss}. Consequently, $b_c > b_d(P)$, that is, $b_d<b_c$. The same arguments hold with reverse inequalities in the case $J_c(0;b_c) > -P$.
\end{proof}

\subsection{Final steps toward the solution}\label{subsec:result}

Now, we have all the elements to complete the proof of the main result, i.e., Theorem~\ref{thm:sol-main} below.

As both $V_d$ and $V_c$ are solutions of the same HJB equation (see~\eqref{HJB-d} and~\eqref{HJB-c}), we will need to be able to compare the \textit{initial conditions} in~\eqref{bound-cond-d} and~\eqref{bound-cond-c}. This is the purpose of the following proposition.
\begin{proposition}\label{prop:equiv}
    We have $V_d'(0) > \beta$ if and only if $V_c(0) + P > 0$.
\end{proposition}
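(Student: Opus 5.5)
We prove both implications through the unified representation and Lemma~\ref{lem-delxJd}. The comparison will be carried out at the level of a single threshold, either $b_d$ or $b_c$, and then transferred to the value functions using optimality.

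The starting point is the following identity, valid for any $b\geq 0$:
\[
\delx J_d(0;b) - \beta = \delx J_d(0;b) - \delx J_c(0;b) = -\big(P + J_c(0;b)\big)\,\delx\Phi(0;b).
\]
It follows from~\eqref{Jd-compact}, \eqref{Jc-compact} and the boundary condition $\delx J_c(0;b)=\beta$. Since $\Phi(\cdot;b)$ is the Laplace transform of a ruin time, it is decreasing in $x$, with $\Phi(0;b)=1$ and $\Phi<1$ on $(0,\infty)$. So I expect $\delx\Phi(0;b)<0$ strictly. This can be checked directly from the explicit formula, because $\Wq(x-b)/\Wq(-b)$ has a strictly negative derivative at $0$ (using $\Wq(0)=0$ and the explicit $\Wq$ in the appendix). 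Granting this, we get the key equivalence
\[
\delx J_d(0;b) > \beta \iff J_c(0;b) + P > 0 ,
\]
and likewise for $=$ and $<$. Assertion~(2) of Lemma~\ref{lem-delxJd} is the equality case of this.

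For the direction ``$\Leftarrow$'', suppose $V_c(0)+P>0$, i.e.\ $J_c(0;b_c)+P>0$. Taking $b=b_c$ in the equivalence gives $\delx J_d(0;b_c)>\beta$. By Lemma~\ref{lem-delxJd}(1), $V_d'(0)=\delx J_d(0;b_d)\geq \delx J_d(0;b_c)>\beta$.

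For the direction ``$\Rightarrow$'', we argue by contraposition: assume $V_c(0)+P\leq 0$ and show $V_d'(0)\leq\beta$. Here the strategy is to evaluate at $b=b_d$ and use that $V_c(0)=J_c(0;b_c)\geq J_c(0;b_d)$, which holds by optimality of $b_c$ within $\Pi_c$. This gives $J_c(0;b_d)+P\leq V_c(0)+P\leq 0$, so the equivalence at $b=b_d$ yields $\delx J_d(0;b_d)\leq\beta$, i.e.\ $V_d'(0)\leq \beta$.

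The two directions are thus symmetric uses of optimality, of $b_d$ in $\Pi_d$ and of $b_c$ in $\Pi_c$, and the argument does not need assertion~(3) of Lemma~\ref{lem-delxJd}.

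The degenerate case in which~\eqref{cond-params-bd} fails, so that $b_d=0$, needs a separate check. In that case I would verify that the formulas for $\Phi(\cdot;0)$ and $J_c(\cdot;0)$ still yield the identity above with $\delx\Phi(0;0)<0$; this follows from the $\Hq$ representation, since $\Hqprime(0)<0$ because $\Hq$ is decreasing.

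The main obstacle I expect is the strict sign $\delx\Phi(0;b)<0$ for every $b\geq0$. It is intuitive, since starting slightly above zero the Brownian part delays ruin at first order. To make it rigorous I would use the explicit expressions: for $b>0$, $\delx\Phi(0;b)=\Wqprime(-b)/\Wq(-b)$ up to the $\Phi(b;b)\Wqprime(0)/\Wq(b)$ term, and I would need to show that the sum is negative. I would first try to get this from monotonicity of $x\mapsto\Phi(x;b)$ together with real-analyticity of $\Phi(\cdot;b)$ near $0$, which rules out a vanishing derivative unless $\Phi$ is constant. Failing that, I would use the formula $\delx J_c(0;b)=\beta$ with $\beta>1$, which forces $\delx\Phi(0;b)\neq0$ via $J_c(0;b)=(\beta-\delx R(0;b))/\delx\Phi(0;b)$ being well defined.
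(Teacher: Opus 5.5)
Your proof is correct. For the implication $V_d'(0)>\beta\Rightarrow V_c(0)+P>0$ it is more direct than the paper's; the converse implication is exactly the paper's argument (the sign identity at $b=b_c$, then Lemma~\ref{lem-delxJd}(1)).

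\textbf{The forward implication.} The paper argues by contradiction, starting from the threshold $b_c$:
\begin{itemize}
\item If $\delx J_d(0;b_c)=\beta$, it uses Lemma~\ref{lem-delxJd}(2)--(3) to get $b_c=b_d$.
\item If $\delx J_d(0;b_c)<\beta$, it uses continuity of $b\mapsto\delx J_d(0;b)$ and the intermediate value theorem to find $b^*$ with $J_c(0;b^*)=-P>J_c(0;b_c)$. This contradicts the optimality of $b_c$ in $\Pi_c$.
\end{itemize}
You instead evaluate the identity at $b=b_d$. There, $\delx J_d(0;b_d)>\beta$ gives $J_c(0;b_d)>-P$ at once, and optimality of $b_c$ finishes the argument. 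In effect, $b_d$ already does the job of the paper's $b^*$.

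\textbf{What each route buys.} Your version needs neither Lemma~\ref{lem-delxJd}(3) nor continuity in $b$. It therefore also avoids Proposition~\ref{prop:seuils-croiss} and the equality-case identification $b_c=b_d$. It also covers $b_d=0$ without any change. In the paper's proof, the ordering of $b_c$ and $b_d$ drawn from Lemma~\ref{lem-delxJd}(3) plays no role in the IVT step. Moreover, under $J_c(0;b_c)<-P$ that lemma gives $b_d<b_c$, not $b_c<b_d$ as written. The paper's route mainly has the merit of being interwoven with the threshold comparison of Lemma~\ref{lem-delxJd}(3). Theorem~\ref{thm:sol-main} needs that comparison anyway for its statements about $b_d$ versus $b_c$.

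\textbf{The strict sign $\delx\Phi(0;b)<0$.} The paper also uses this tacitly. Your analyticity argument does not work as stated: a nonconstant analytic function that decreases near $0$ can still have zero derivative there (e.g.\ $1-x^2$). Use the ODE instead. Near $0$, $\Phi(\cdot;b)$ is smooth and solves $\frac{\sigma^2}{2}f''+(\mu-(Kx+S)\ind_{x\geq b})f'-qf=0$ with $f(0)=1$. So $\delx\Phi(0;b)=0$ would force $\partial_{xx}\Phi(0;b)=2q/\sigma^2>0$, hence $\Phi(x;b)>1$ for small $x>0$, which is impossible since $\Phi\leq 1$.

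Your second fallback also works if phrased correctly. The identity $\delx J_c(0;b)=\beta$ holds for every $\beta>1$, while $\delx R(0;b)$ and $\delx\Phi(0;b)$ do not depend on $\beta$. So $\delx\Phi(0;b)=0$ would give $\beta=\delx R(0;b)$ for all $\beta>1$, a contradiction.
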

\begin{proof}
    Since $V_d = J_d(\cdot;b_d)$ and $V_c = J_c(\cdot;b_c)$, we can state the equivalence as: $\delx J_d(0;b_d) > \beta$ if and only if $J_c(0;b_c) + P > 0$. Recall that the inequality $J_c(0;b_c) + P > 0$ is equivalent to $\delx R(0;b_c) - P\delx\Phi(0;b_c) > \beta$. Consequently, by~\eqref{Jd-compact}, the equivalence now becomes: $\delx J_d(0;b_d) > \beta$ if and only if $\delx J_d(0;b_c) > \beta$.
    
    To prove the equivalence, we assume first that $\delx J_d(0;b_c) > \beta$. Taking $b$ equal to $b_c$ in statement~(1) of Lemma~\ref{lem-delxJd} yields $\delx J_d(0;b_d) > \beta$. Next, assume that $\delx J_d(0;b_d) > \beta$ and let us prove by contradiction that $\delx J_d(0;b_c) > \beta$. Suppose that $\delx J_d(0;b_c) \leq \beta$. If $\delx J_d(0;b_c) =\beta$, or equivalently if $J_c(0;b_c) = -P$ (by statement~(2) of Lemma~\ref{lem-delxJd}), then $b_c = b_d$ by statement (3) of Lemma~\ref{lem-delxJd}. Hence, $\delx J_d(0;b_d) = \delx J_d(0;b_c) = \beta$, which contradicts the assumption that $\delx J_d(0;b_d) > \beta$.
    Finally, if $\delx J_d(0;b_c) < \beta$, then $b_c < b_d$ by statement~(3) of Lemma~\ref{lem-delxJd}. By assumption, $\delx J_d(0;b_c) < \beta < \delx J_d(0;b_d)$, and $b\mapsto \delx J_d(0;b)$ is continuous, then there exists $b^*\in(b_c,b_d)$ such that $\delx J_d(0;b^*) = \beta$ by the intermediate value theorem. Thus, by statement~(2) of Lemma~\ref{lem-delxJd}, we have $J_c(0;b^*) = -P$. Using the same arguments as at the beginning of the proof, we see that $\delx J_d(0;b_c) < \beta$ and $J_c(0;b_c) < -P$ are equivalent. However, we have $-P = J_c(0;b^*) \leq J_c(0;b_c)$, which yields a contradiction.
\end{proof}

The inequalities in Proposition~\ref{prop:equiv} can be written more explicitly using the representations of $V_d$ and $V_c$ given in~\eqref{Vd-expli} and in~\eqref{Vc-expli}, respectively. Written differently, Proposition~\ref{prop:equiv} yields that
\begin{equation}
    \beta < -P\frac{\Wqprime (-b_d)}{\Wq(-b_d)} + \left(1+P\frac{\Wqprime (0)}{\Wq(-b_d)}\right)\frac{\Wqprime (0)}{\Wqprime (b_d)}
\end{equation}
if and only if
\begin{equation}
    P > \frac{-\Wq(-b_c)\left(\beta \Wqprime (b_c) - \Wqprime (0)\right)}{\Wqprime (-b_c) \Wqprime (b_c) - \Wqprime(0)^2} ,
\end{equation}
assuming that~\eqref{cond-params-bd} is satisfied. This equivalence will allow us to formulate the optimal solution of our problem using either $\beta$ or $P$ as a criterion for the dichotomy; see Theorem~\ref{thm:sol-main} and Corollary~\ref{cor:sol-main} below.

\begin{remark}
Of course, the criterion for optimality depends jointly on the parameters $\beta$ and $P$, but these two equivalent forms of the criterion provide interesting insight into the effect of each parameter on the optimal dichotomy.
\end{remark}

As a last intermediate step, here is the verification lemma of the main optimization problem.
\begin{lemma}\label{lem:verif}
    Let $f\in\mathcal{C}^2(\reals_+)$ be a concave solution to the HJB equation
    \begin{equation}\label{HJB}
        (\mathcal{A}-q)f(x) + \sup_{0\leq\lambda\leq Kx+S}\left[\lambda(1-f'(x))\right] = 0, \quad x > 0,
    \end{equation}
    with boundary condition
    \begin{equation}\label{bound-cond}
        \max\{-(f(0)+P), f'(0)-\beta\} = 0 .
    \end{equation}
Then, $f(x)\geq V(x)$ for all $x\geq 0$.
\end{lemma}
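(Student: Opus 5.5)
Fix $x\geq 0$ and an arbitrary $\pi=(\ell^\pi,G^\pi)\in\Pi$ with $X^\pi_{0-}=x$. We will show $f(x)\geq J(x;\pi)$ by applying Itô's formula for semimartingales to $\e^{-qt}f(X^\pi_t)$ up to $t\wedge\tau^\pi\wedge T_n$. Here $T_n=\inf\{t: X^\pi_t>n\}$ localizes the stochastic integral. Since $f$ is only defined on $\reals_+$ and $X^\pi$ may jump below zero at $\tau^\pi$ through no fault of $G^\pi$, we first extend $f$ to $(-\infty,0)$ by $f(y)=f(0)$. The extension is then concave only if $f'(0)\geq 0$; this holds because $f'(0)\geq 1$, as argued below. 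We will only use the extension at the single point $X^\pi_{\tau^\pi}$, where the penalty enters. Writing $G^{\pi,c}$ for the continuous part of $G^\pi$ and $\Delta X^\pi_s=\Delta G^\pi_s\geq 0$ for its jumps (jumps occur only through injections, including possibly at time $0$), we get
\begin{align}
\e^{-q(t\wedge\tau^\pi)}f(X^\pi_{t\wedge\tau^\pi}) &= f(x) + \int_0^{t\wedge\tau^\pi}\e^{-qs}\left[(\mathcal{A}-q)f(X^\pi_s) - \ell^\pi_s f'(X^\pi_s)\right]\diff s + M_t \\
&\quad + \int_0^{t\wedge\tau^\pi}\e^{-qs}f'(X^\pi_{s-})\diff G^{\pi,c}_s + \sum_{s\leq t\wedge\tau^\pi}\e^{-qs}\left[f(X^\pi_s)-f(X^\pi_{s-})\right],
\end{align}
where $M$ is a local martingale.

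The key inequalities come from the HJB equation and concavity. Since $\ell^\pi_s\in[0,KX^\pi_s+S]$ and $X^\pi_s\geq 0$ before ruin, the HJB equation gives $(\mathcal{A}-q)f(X^\pi_s)-\ell^\pi_s f'(X^\pi_s)\leq -\ell^\pi_s$. The boundary condition gives $f'(0)\leq\beta$. Concavity then gives $f'(y)\leq f'(0)\leq \beta$ for all $y\geq 0$, so the $\diff G^{\pi,c}$ integral is at most $\beta\int\e^{-qs}\diff G^{\pi,c}_s$. For a jump from $X^\pi_{s-}\geq 0$, the mean value theorem gives $f(X^\pi_s)-f(X^\pi_{s-})\leq \beta\,\Delta G^\pi_s$.

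It remains to check $f'\geq 1$. Suppose $f'(x_0)<1$ for some $x_0$. By concavity $f'<1$ on $[x_0,\infty)$, so the sup in the HJB is attained at $\lambda=Kx+S$. Then $f$ solves a linear ODE there, and its asymptotics, of the form $\frac{K}{q+K}x+\text{const}$ plus decaying terms, give $f'\to K/(q+K)$. This is compatible with $f'<1$, so the argument needs care. What we actually need is milder: $f(0)\leq f(y)$ for $y\geq 0$, i.e.\ $f$ nondecreasing. If $f'(x_1)<0$, then by concavity $f'\leq f'(x_1)<0$ on $[x_1,\infty)$, and the HJB becomes $\tfrac12\sigma^2f''+\mu f'-qf+(Kx+S)(1-f')=0$. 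Here $(Kx+S)(1-f')\to\infty$ while $f''\leq 0$ and $\mu f'<0$, which forces $qf\to+\infty$, contradicting $f$ decreasing. Hence $f'\geq 0$, the constant extension is harmless, and at ruin we have $f(X^\pi_{\tau^\pi})\geq$ the value obtained via $f(0)$ (or via the pre-ruin value, since $X^\pi$ is continuous at ruin except for injection jumps, which are upward). The boundary condition gives $f(0)\geq -P$.

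Taking expectations kills $M$ after localization. Rearranging,
\begin{align}
f(x)\geq \E_x\Big[\int_0^{t\wedge\tau^\pi\wedge T_n}\e^{-qs}(\ell^\pi_s\diff s-\beta\diff G^\pi_s)+\e^{-q(t\wedge\tau^\pi\wedge T_n)}f(X^\pi_{t\wedge\tau^\pi\wedge T_n})\Big].
\end{align}
Let $n,t\to\infty$. On $\{\tau^\pi<\infty\}$, continuity at ruin gives $X^\pi_{\tau^\pi}=0$, so the terminal term tends to $\e^{-q\tau^\pi}f(0)\geq -P\e^{-q\tau^\pi}$. On $\{\tau^\pi=\infty\}$, the terminal term has liminf at least $0$ when $f\geq 0$; in general it is bounded below by $\min(f(0),0)\e^{-qt}\to 0$, using $f$ nondecreasing. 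The dividend integral converges by monotone convergence: $\ell^\pi\leq KX^\pi+S$, and $\E_x[\int\e^{-qs}X^\pi_s\diff s]$ is finite whenever the $G$ term is finite. If $\E\int\e^{-qs}\diff G^\pi_s=\infty$, then $J=-\infty$ and there is nothing to prove. The injection integral also converges monotonically. This yields $f(x)\geq J(x;\pi)$, and taking the supremum gives $f\geq V$.

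The \textbf{main obstacle} is the monotonicity and limit step: justifying $f'\geq 0$ from concavity plus the HJB, and controlling the terminal term on $\{\tau^\pi=\infty\}$ together with the integrability of dividends. The linear growth bound $f(y)\leq f(0)+\beta y$ from concavity, combined with $\E_x[\e^{-qt}X^\pi_t]\to 0$ (which holds when injections have finite discounted cost), will be used to handle this.
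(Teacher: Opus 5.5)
\textbf{The step $f'\geq 0$ is a genuine gap, and it cannot be repaired: the lemma as stated is false.} The rest of your skeleton is the standard verification argument that the paper defers to L\o kka--Zervos, and those steps are fine. That skeleton is It\^o's formula up to $t\wedge\tau^\pi\wedge T_n$, the HJB bound on the $\diff s$-terms, and $f'\leq f'(0)\leq\beta$ for both the continuous part and the jumps of $G^\pi$. It also uses $X^\pi_{\tau^\pi}=0$ on $\{\tau^\pi<\infty\}$, which holds because all jumps are upward, so extending $f$ below $0$ is unnecessary.

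Your argument for $f'\geq 0$ is a \emph{non sequitur}. In $qf=\frac12\sigma^2f''+\mu f'+(Kx+S)(1-f')$ the two nonpositive terms may dominate the third.

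\textbf{Counterexample.} Let $\psi>0$ be the increasing solution of $\frac12\sigma^2\psi''+(\mu-S-Kx)\psi'-q\psi=0$. For $K=0$, take $\psi(x)=\e^{\gamma x}$, with $\gamma$ the positive root of $\frac12\sigma^2\gamma^2+(\mu-S)\gamma-q=0$. For $K>0$, take the increasing Ornstein--Uhlenbeck solution; it is convex with $\psi'\to\infty$, because $\psi'$ is a positive multiple of the increasing solution of the same equation with $q$ replaced by $q+K$. Set
\[ f(x)=\frac{S}{q}+\frac{K}{q+K}\Bigl(x+\frac{\mu-S}{q}\Bigr)-c\,\psi(x), \]
with $c>0$ chosen so that $f(0)=-P$; this is possible as soon as $S$, $K$ or $P$ is positive. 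Then:
\begin{itemize}
\item $f'<\frac{K}{q+K}<1$, so the supremum in \eqref{HJB} is attained at $\lambda=Kx+S$ and \eqref{HJB} holds identically;
\item $f''=-c\psi''<0$, so $f$ is concave;
\item $f(0)=-P$ and $f'(0)<\beta$ give \eqref{bound-cond}.
\end{itemize}
Yet $f(x)\to-\infty$, whereas $V\geq -P$ (take the strategy that never pays and never injects). So concavity plus the HJB equation cannot yield monotonicity or any lower bound on $f$. For the same reason, the upper bound $f(y)\leq f(0)+\beta y$ in your last paragraph cannot help: the limit step needs a lower bound on the terminal term.

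\textbf{What is missing is a hypothesis, not an argument.} In L\o kka--Zervos, monotonicity comes for free because the singular HJB contains the constraint $f'\geq 1$; the absolutely continuous HJB has no such constraint. Add to the lemma that $f$ is nondecreasing (for concave $f$, this is equivalent to being bounded below). This costs the paper nothing. The lemma is only applied to $V_d$ and $V_c$, and the formulas of Appendix~\ref{app:Vd} show their derivatives are at least $\frac{K}{q+K}$ to the right of the threshold, since $\Hqprime<0$. By concavity this bound then holds everywhere.

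With that hypothesis, your limit step goes through as you intended:
\begin{itemize}
\item on $\{\tau^\pi\leq t\wedge T_n\}$ the terminal term equals $\e^{-q\tau^\pi}f(0)\geq -P\e^{-q\tau^\pi}$;
\item otherwise it is at least $\min\{f(0),0\}\,\e^{-q(t\wedge T_n)}$, which tends to $0$.
\end{itemize}
The rest of your proof then stands.
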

The proof of this verification lemma is standard (see, e.g., \cite{lokka-zervos_2008}), so it is left to the reader.

Now, using Lemma~\ref{lem:verif} with the properties of $V_d$ and $V_c$ and defining
\begin{equation}\label{beta-star}
    \beta^* := -P\frac{\Wqprime (-b_d)}{\Wq(-b_d)} + \left(1+P\frac{\Wqprime (0)}{\Wq(-b_d)}\right)\frac{\Wqprime (0)}{\Wqprime (b_d)},
\end{equation}
we can state and prove the main theorem.

\begin{theorem}\label{thm:sol-main}
If $\frac{q}{q+K}\frac{\Hq(0)}{\Hqprime(0)} + \ettaz + P > 0$, then $b_d>0$ and the solution of the main optimization problem is given by:
    \begin{itemize}
        \item if $\beta > \beta^*$, then an optimal dividend-injection strategy is given by $(\ell^{b_d}, 0)$, and thus $V = V_d$ with $b_d < b_c$;
        \item if $\beta < \beta^*$, then an optimal dividend-injection strategy is given by $(\underline \ell^{b_c}, G^{b_c})$, and thus $V = V_c$ with $b_c < b_d$;
        \item if $\beta = \beta^*$, then both aforementioned strategies are optimal, and thus $V=V_c=V_d$ with $b_c=b_d$.
    \end{itemize}
Otherwise, $b_d=0$ and an optimal strategy is given by the \textit{liquidation strategy} $(\ell^0, 0)$, and thus $V = V_d$.
\end{theorem}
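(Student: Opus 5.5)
The plan is to combine the verification Lemma~\ref{lem:verif} with the lower bound $V\geq\max\{V_d,V_c\}$, applied to $V_d$ or $V_c$ depending on the sign of $\beta-\beta^*$. First I would identify $\beta^*$ with $V_d'(0)$. Using the representation \eqref{Jd-byparts} with $b=b_d$, I would differentiate at $x=0$ and use $\delx J_d(b_d;b_d)=1$, i.e.\ the smooth-fit condition $F(b_d)=0$. Together with the explicit $J_d(b_d;b_d)$, this should give exactly the right-hand side of \eqref{beta-star}. So $\beta>\beta^*$ iff $V_d'(0)<\beta$, and by Proposition~\ref{prop:equiv} this holds iff $V_c(0)+P<0$, i.e.\ $J_c(0;b_c)<-P$. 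Similarly, $\beta<\beta^*$ iff $J_c(0;b_c)>-P$, and $\beta=\beta^*$ iff equality holds there. The last equivalence needs the negations of Proposition~\ref{prop:equiv} together with Lemma~\ref{lem-delxJd}(2),(3): if $V_d'(0)=\beta$, then $J_c(0;b_d)=-P$, and the argument in the proof of Lemma~\ref{lem-delxJd} identifies $b_c=b_d$. The ordering of the thresholds in each case then follows directly from Lemma~\ref{lem-delxJd}(3).

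\emph{Case $\beta>\beta^*$.} I take $f=V_d$. It is concave, solves \eqref{HJB-d}, which coincides with \eqref{HJB}, and satisfies $f(0)=-P$. It is $\mathcal C^2$ on $(0,\infty)$ because $b_d$ is the smooth-fit level; at $0$ it has one-sided derivatives, which suffices for the Itô argument. Since $f'(0)=\beta^*<\beta$, we get $\max\{-(f(0)+P),f'(0)-\beta\}=\max\{0,\beta^*-\beta\}=0$, so \eqref{bound-cond} holds. Lemma~\ref{lem:verif} then gives $V\le V_d$, and hence $V=V_d$, with optimal strategy $(\ell^{b_d},0)$.

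\emph{Case $\beta<\beta^*$.} I take $f=V_c$, which is concave, solves \eqref{HJB-c} and has $f'(0)=\beta$. Here $J_c(0;b_c)>-P$ gives $-(f(0)+P)<0$, so \eqref{bound-cond} again holds. The verification lemma yields $V\le V_c$, hence $V=V_c$, attained by $(\underline\ell^{b_c},G^{b_c})$.

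\emph{Case $\beta=\beta^*$.} Both boundary terms vanish for $V_d$ and for $V_c$, so both functions are admissible in Lemma~\ref{lem:verif}. Alternatively, $b_c=b_d$ gives $J_c(\cdot;b_c)=R-P\Phi=J_d(\cdot;b_d)$ directly through \eqref{Jd-compact}--\eqref{Jc-compact}.

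\emph{Case where \eqref{cond-params-bd} fails.} Then $b_d=0$ and $V_d=J_d(\cdot;0)$ is the liquidation value from Theorem~\ref{thm:sol-Jd}. I must show $V_d'(0)\le\beta$ for every $\beta>1$, i.e.\ $V_d'(0)\le1$. Since $V_d$ is concave and solves \eqref{HJB-d} with the maximal rate used everywhere, we need $V_d'\le1$ on $(0,\infty)$; the HJB optimality of $\lambda=Kx+S$ at all $x>0$ forces $1-V_d'(x)\ge0$, and letting $x\downarrow0$ gives $V_d'(0)\le1<\beta$. Then the same verification as in the first case applies, giving $V=V_d$ with the liquidation strategy, whatever the value of $\beta$.

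The main obstacle I expect is the explicit computation identifying $\beta^*$ with $V_d'(0)$, and the careful handling of the equality and borderline cases via Lemma~\ref{lem-delxJd}. The smoothness of $V_c$ and $V_d$ on $\mathbb R_+$ needed by the verification lemma is a secondary technical point.
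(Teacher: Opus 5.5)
Your proposal is correct and follows the paper's proof: you verify $V_d$ or $V_c$ through Lemma~\ref{lem:verif} according to the sign of $\beta-\beta^*$ (with $\beta^*=V_d'(0)$), combine this with $V\geq\max\{V_d,V_c\}$, translate the condition through Proposition~\ref{prop:equiv}, and read off the threshold ordering from Lemma~\ref{lem-delxJd}(3). The differences are minor. You justify the strict and equality equivalences more carefully than the paper, which simply asserts them; your argument uses $J_c(0;b_d)=-P \Rightarrow J_c(\cdot;b_d)=J_d(\cdot;b_d)$ together with uniqueness of the smooth-fit level $b_c$. In the unfavourable case you obtain $V_d'(0)\le 1$ from the HJB equation (this needs the observation that $J_d(\cdot;0)$ solves the linear ODE with rate $Kx+S$). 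The paper instead reads the bound off \eqref{Vd-bd0-expli}, where it is just a rearrangement of the failed condition.
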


\begin{proof}
First, recall that
\begin{equation}\label{V-VdVc}
V(x) \geq \max\{V_d(x), V_c(x)\}, \quad x\geq 0.
\end{equation}

Assume that $\frac{q}{q+K}\frac{\Hq(0)}{\Hqprime(0)} + \ettaz + P > 0$.
    \begin{itemize}
        \item In the first case, the condition on $\beta$ is equivalent to $V^\prime_d(0) < \beta$. We know from Section \ref{subsec:J-prop} that $V_d$ is a concave solution of \eqref{HJB} with boundary condition $V_d(0) = -P$. But since $V^\prime_d(0) < \beta$, it follows that $V_d$ also satisfies boundary condition \eqref{bound-cond}. Thus, by Lemma \ref{lem:verif}, $V_d \geq V$. By \eqref{V-VdVc}, $V\geq V_d$, thus, $V = V_d$. Finally, since $V^\prime_d(0) < \beta$ is equivalent to $V_c(0)<-P$ by Proposition \ref{prop:equiv}, then $b_d < b_c$ by statement (3) of Lemma \ref{lem-delxJd}.
        \item The condition on $\beta$ in the second case is equivalent to $V_c(0) > -P$. We know from Section \ref{subsec:J-prop} that $V_c$ is a concave solution of \eqref{HJB} with boundary condition $V^\prime_c(0) = \beta$. Coupled with $V_c(0) > -P$, it follows that boundary condition \eqref{bound-cond} is also satisfied by $V_c$. Hence, $V_c\geq V$ by Lemma \ref{lem:verif}, and $V\geq V_c$ by \eqref{V-VdVc}. In conclusion, $V = V_c$. Finally, $V_c(0) > -P$ implies that $b_c < b_d$ by statement (3) of Lemma \ref{lem-delxJd}.
        \item In the co-optimal case, the condition on $\beta$ is equivalent to $V^\prime_d(0) = \beta$ and $V_c(0) = -P$. Thus, both $V_d$ and $V_c$ satisfy boundary condition \eqref{bound-cond}, and as we know, they both are concave and satisfy \eqref{HJB}. Hence, $V_d\geq V$ and $V_c\geq V$. Additionally, we have $V\geq V_d$ and $V\geq V_c$ from \eqref{V-VdVc}, so we conclude that $V = V_d = V_c$. Finally, $b_d=b_c$ from statement (3) of Lemma \ref{lem-delxJd} since $V_c(0) = -P$. 
    \end{itemize}

Now, assume that $\frac{q}{q+K}\frac{\Hq(0)}{\Hqprime(0)} + \ettaz + P \leq 0$, in which case we have $b_d=0$ with $V_d$ given by~\eqref{Vd-bd0-expli} and such that
    \begin{equation}
        V^\prime_d(0) = \frac{K}{q+K} - \left(\ettaz + P\right)\frac{\Hqprime(0)}{\Hq(0)} \leq 1.
    \end{equation}
Hence, $V^\prime_d(0) < \beta$. The rest of the proof is as above, so we conclude that $V=V_d$ with $b_d=0$. 
\end{proof}

As mentioned earlier, the solution can also be stated from the point of view of parameter~$P$. Before doing so, let us define
\begin{equation}\label{P-star}
    P^* := \frac{-\Wq(-b_c)\left(\beta \Wqprime(b_c) - \Wqprime(0)\right)}{\Wqprime(-b_c) \Wqprime(b_c) - \Wqprime(0)^2} .
\end{equation}

\begin{corollary}\label{cor:sol-main}
If $\frac{q}{q+K}\frac{\Hq(0)}{\Hqprime(0)} + \ettaz + P > 0$, then $b_d>0$ and the solution of the main optimization problem is given by:
    \begin{itemize}
        \item if $P < P^*$, then an optimal strategy is given by $(\ell^{b_d}, 0)$, and thus $V = V_d$ with $b_d < b_c$;
        \item if $P > P^*$, then an optimal strategy is given by $(\underline \ell^{b_c}, G^{b_c})$, and thus $V = V_c$ with $b_c < b_d$;
        \item if $P = P^*$, then both aforementioned strategies are optimal, and thus $V=V_c=V_d$ with $b_c=b_d$.
    \end{itemize}
Otherwise, $b_d=0$ and an optimal strategy is given by $(\ell^0, 0)$, and thus $V = V_d$.
\end{corollary}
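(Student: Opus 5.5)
The plan is to reduce the corollary to Theorem~\ref{thm:sol-main}. The three regimes there are characterised by comparing $V_d'(0)$ with $\beta$; here we instead compare $V_c(0)$ with $-P$, and Proposition~\ref{prop:equiv} passes between the two criteria. The first step is to make $V_c(0)=J_c(0;b_c)$ explicit in terms of $b_c$, $\beta$ and the scale function $\Wq$ using \eqref{Vc-expli}. Then check by direct algebra that $V_c(0)+P>0$ is equivalent to $P>P^*$, with $P^*$ as in \eqref{P-star}, and likewise for $=$ and $<$. The key structural point is that $b_c$ and $V_c$ do not depend on $P$, since $P$ plays no role for strategies in $\Pi_c$ (where $\tau^\pi=\infty$). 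Hence $V_c(0)+P$ is affine in $P$ with slope $1$, and the threshold $P^*=-V_c(0)$ is a genuine constant in $P$. This last identity, $P^*=-J_c(0;b_c)$, is precisely the computation behind the displayed reformulation following Proposition~\ref{prop:equiv}, which I will rewrite from the expression $J_c(0;b)=(\beta-\delx R(0;b))/\delx\Phi(0;b)$.

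The second step handles the three cases, assuming \eqref{cond-params-bd}. If $P>P^*$, then $V_c(0)>-P$. Exactly as in the second bullet of the proof of Theorem~\ref{thm:sol-main}, $V_c$ is a concave $C^2$ solution of \eqref{HJB} with $V_c'(0)=\beta$ and $-(V_c(0)+P)<0$, so \eqref{bound-cond} holds. Lemma~\ref{lem:verif} then gives $V_c\ge V$, and together with $V\ge V_c$ we get $V=V_c$; statement (3) of Lemma~\ref{lem-delxJd} gives $b_c<b_d$. If $P<P^*$, then $V_c(0)<-P$, and Proposition~\ref{prop:equiv} (in its contrapositive and equality forms, via statements (2)--(3) of Lemma~\ref{lem-delxJd}) yields $V_d'(0)<\beta$. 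So $V_d$ satisfies \eqref{bound-cond} with $V_d(0)=-P$, and we conclude $V=V_d$ and $b_d<b_c$ in the same way. If $P=P^*$, then $V_c(0)=-P$, so $b_c=b_d$ by Lemma~\ref{lem-delxJd}(3), and $V_d'(0)=\beta$ by Lemma~\ref{lem-delxJd}(2) applied at $b=b_c=b_d$. Both functions then satisfy \eqref{bound-cond}, which gives $V=V_c=V_d$. The case where \eqref{cond-params-bd} fails is literally the last paragraph of the proof of Theorem~\ref{thm:sol-main}.

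The main obstacle is the algebraic identification $-J_c(0;b_c)=P^*$ in the first step. This requires substituting the explicit forms of $\delx R(0;b_c)$ and $\delx\Phi(0;b_c)$, including $R(b;b)$, $\Phi(b;b)$ and the $\Hq$ terms, and then using the optimality condition $G(b_c)=0$, i.e.\ $\delx J_c(b_c;b_c)=1$ with $C^2$ fit at $b_c$, to eliminate the $\Hq$ terms. This leaves an expression purely in $\Wq$ and its derivative at $b_c,0,-b_c$. I would first try to use smooth fit to rewrite $\Hqprime(b_c)/\Hq(b_c)$ in terms of the $\Wq$ quantities, and then simplify. A secondary point to check is that the denominator $\Wqprime(-b_c)\Wqprime(b_c)-\Wqprime(0)^2$ has a definite sign, so that dividing by it preserves the direction of the inequalities.
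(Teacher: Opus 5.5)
Your proposal is correct and is essentially the paper's argument: the corollary is Theorem~\ref{thm:sol-main} re-expressed through Proposition~\ref{prop:equiv} and Lemma~\ref{lem-delxJd}, with the cases decided by comparing $V_c(0)$ with $-P$ and with $P^*=-V_c(0)$. The step you call the main obstacle is immediate: setting $x=0$ in \eqref{Vc-expli} and using $\Wq(0)=0$ gives $V_c(0)=-P^*$ directly, so you need neither to eliminate the $\Hq$ terms via $G(b_c)=0$ nor to check the sign of the denominator, since the comparison is simply $P+V_c(0)$ against $0$.
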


Recall that the dichotomy in this form has never appeared before in the literature. 

%%%%%%%%%%%%%%%%%%%%%%%%%%%%%%%%%%%%%%%%%%%%%
%%%%%%%%%%%%%%%%%%%%%%%%%%%%%%%%%%%%%%%%%%%%%
%%%%%%%%%%%%%%%%%%%%%%%%%%%%%%%%%%%%%%%%%%%%%
\section{Sensitivity analysis and discussion}\label{sec:numerical}

Now,  let us illustrate the theoretical results and discuss some interesting properties of the optimal solution. In Section~\ref{subsec:illust-sol}, we provide a sensitivity analysis of the value functions $V, V_d$ and $V_c$ with respect to the parameters $\beta$ and $P$, and then in Section~\ref{subsec:comp-prob} with respect to the parameters $K$ and $S$. Finally, in Section~\ref{subsec:num-K-S}, we analyze the behaviour of the optimal threshold (which is either $b_d$ or $b_c$) with respect to the parameters $K$ and $S$.

In what follows, we will say that the paramaters are \textit{favourable} if we have
\[
\frac{q}{q+K}\frac{\Hq(0)}{\Hqprime(0)} + \ettaz + P > 0 ,
\]
which has appeared in the statement of Theorem~\ref{thm:sol-Jd}.

\subsection{Sensitivity of the value function with respect to \texorpdfstring{$\beta$}{beta} and \texorpdfstring{$P$}{P}}\label{subsec:illust-sol}

First, let us illustrate the sensitivity of the value functions $V$, $V_d$ and $V_c$ with respect to $\beta$ and $P$.

\begin{figure}[b]
	\centering	
	\includegraphics[scale=0.7]{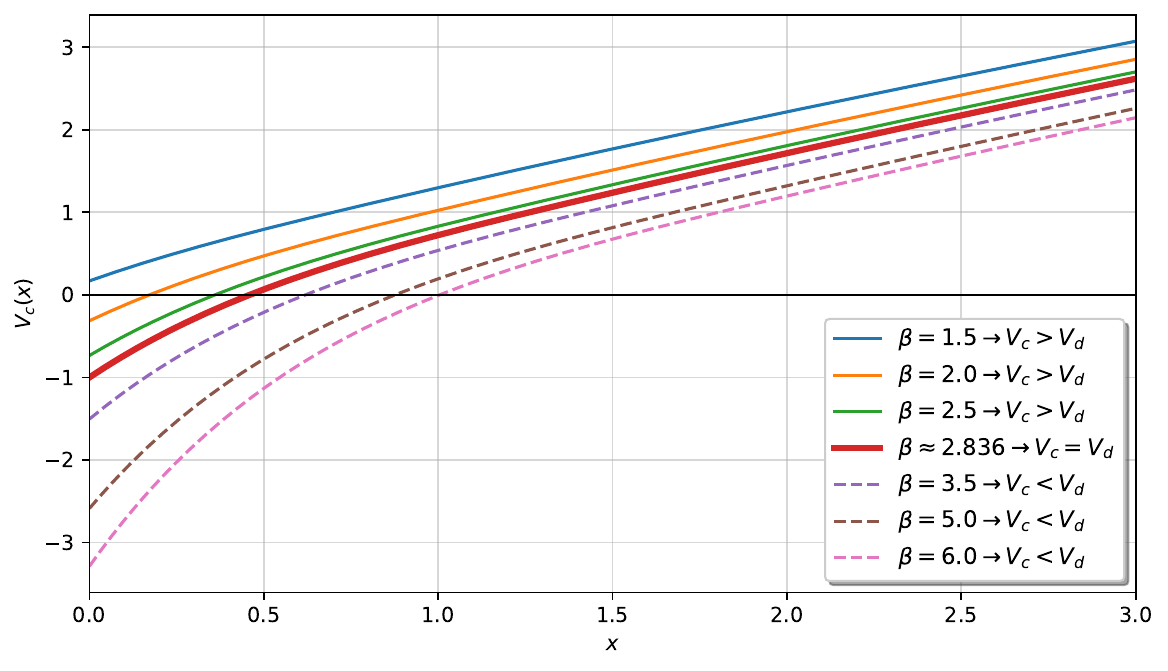}
	\caption{$V_c$ for different $\beta$ when parameters are \textit{favourable}}\label{fig:value-beta}
	\scriptsize
    Parameters: $\mu = 0.5, \; \sigma^2 = 1, \; q = 0.4, \; K = 1, \; S = 2, \; P = 1$.
\end{figure}

In Figure~\ref{fig:value-beta}, to illustrate Theorem~\ref{thm:sol-main}, we have drawn the graph of $V_c$ for different values of $\beta$. When $\beta$ is \textit{large}, then $V=V_d>V_c$ and we use dashed lines for $V_c$; when $\beta$ is \textit{small}, then $V=V_c>V_d$ and we use solid lines for $V_c$. More specifically, the set of parameters in Figure~\ref{fig:value-beta} is such that the switching value $\beta^*$ given by \eqref{beta-star} is approximately $2.8355$. Therefore, when $\beta > 2.8355$ we have $V=V_d > V_c$, and when $\beta < 2.8355$ we have $V=V_c>V_d$. Recall that the value function $V_d$ (given by the thick red line) does not depend on $\beta$.

\begin{figure}
	\centering	
	\includegraphics[scale=0.7]{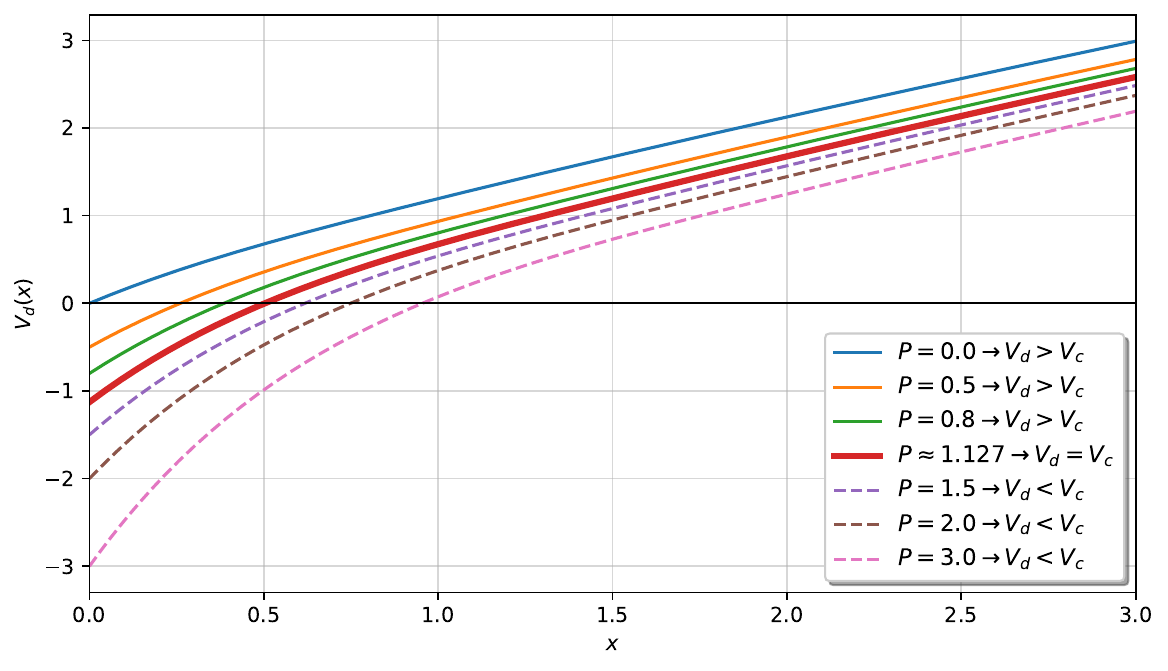}
	\caption{$V_d$ for different $P$ when the parameters are \textit{favourable}}\label{fig:value-P}
	\scriptsize
    Parameters: $\mu = 0.5, \; \sigma^2 = 1, \; q = 0.4, \; K = 1, \; S = 2, \; \beta = 3$.
\end{figure}

In Figure~\ref{fig:value-P}, to illustrate Corollary~\ref{cor:sol-main}, we have drawn the graph of $V_d$ for different values of $P$. When $P$ is \textit{large}, then $V=V_c>V_d$ and we use dashed lines for $V_d$; when $P$ is \textit{small}, then $V=V_d>V_c$ and we use solid lines for $V_d$. More specifically, the set of parameters in Figure~\ref{fig:value-P} is such that the switching value $P^*$ given by \eqref{P-star} is approximately $1.127$. Therefore, when $P > 1.127$ we have $V=V_c>V_d$, and when $P < 1.127$ we have $V=V_d>V_c$. Recall that the value function $V_c$ (given by the thick red line) does not depend on $P$.

\begin{figure}
	\centering	
	\includegraphics[scale=0.7]{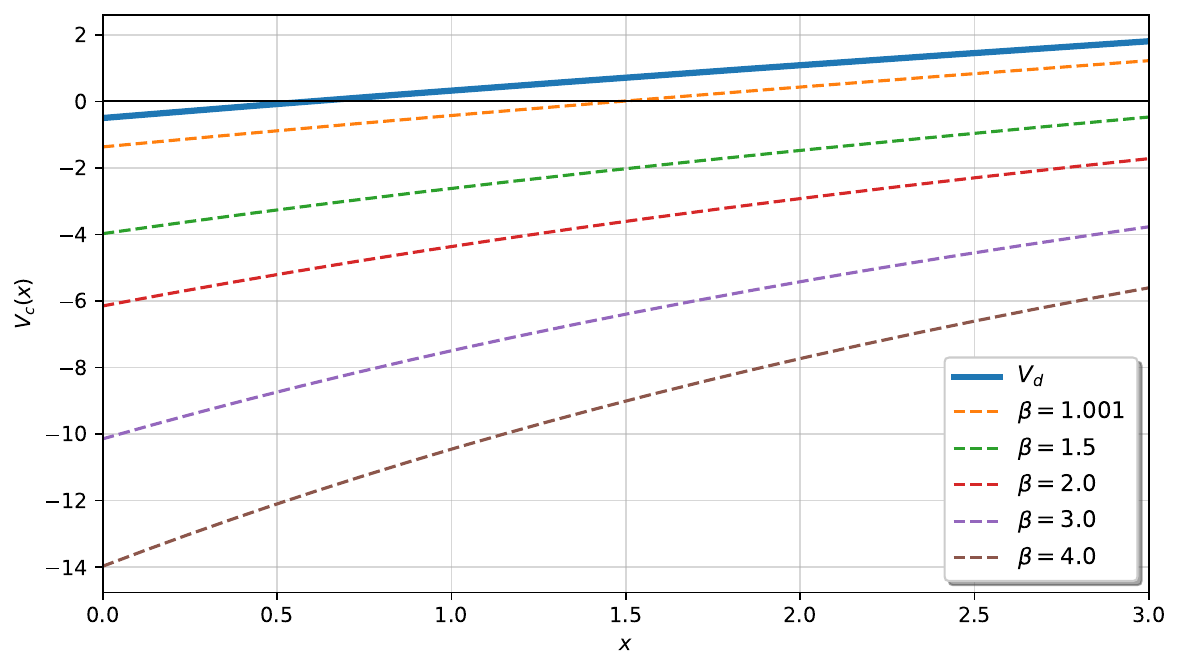}
	\caption{$V_c$ for different $\beta$ when the parameters are \textit{unfavourable}}\label{fig:value-no-dich}
	\scriptsize
    Parameters: $\mu = 0.5, \; \sigma^2 = 5, \; q = 0.8, \; K = 1, \; S = 2, \; P = 0.5$.
\end{figure}

Finally, in Figure~\ref{fig:value-no-dich}, we illustrate Theorem~\ref{thm:sol-main} in the situation where the parameters are \textit{unfavourable}, i.e., when it is optimal to \textit{liquidate as quickly as possible}. More precisely, for such parameters, we have $V(\cdot)=V_d(\cdot)=J_d(\cdot; 0)$, which means that the dividend rate is as high as possible, from the start until ruin. In Figure~\ref{fig:value-no-dich}, we have drawn the graph of $V_c$ for different values of $\beta$. In this case, no matter how small the value of $\beta$, we have $V=V_d>V_c$.

\subsection{Sensitivity of the value function with respect to \texorpdfstring{$K$}{K} and \texorpdfstring{$S$}{S}}\label{subsec:comp-prob}

\begin{figure}
	\centering	
	\includegraphics[scale=0.7]{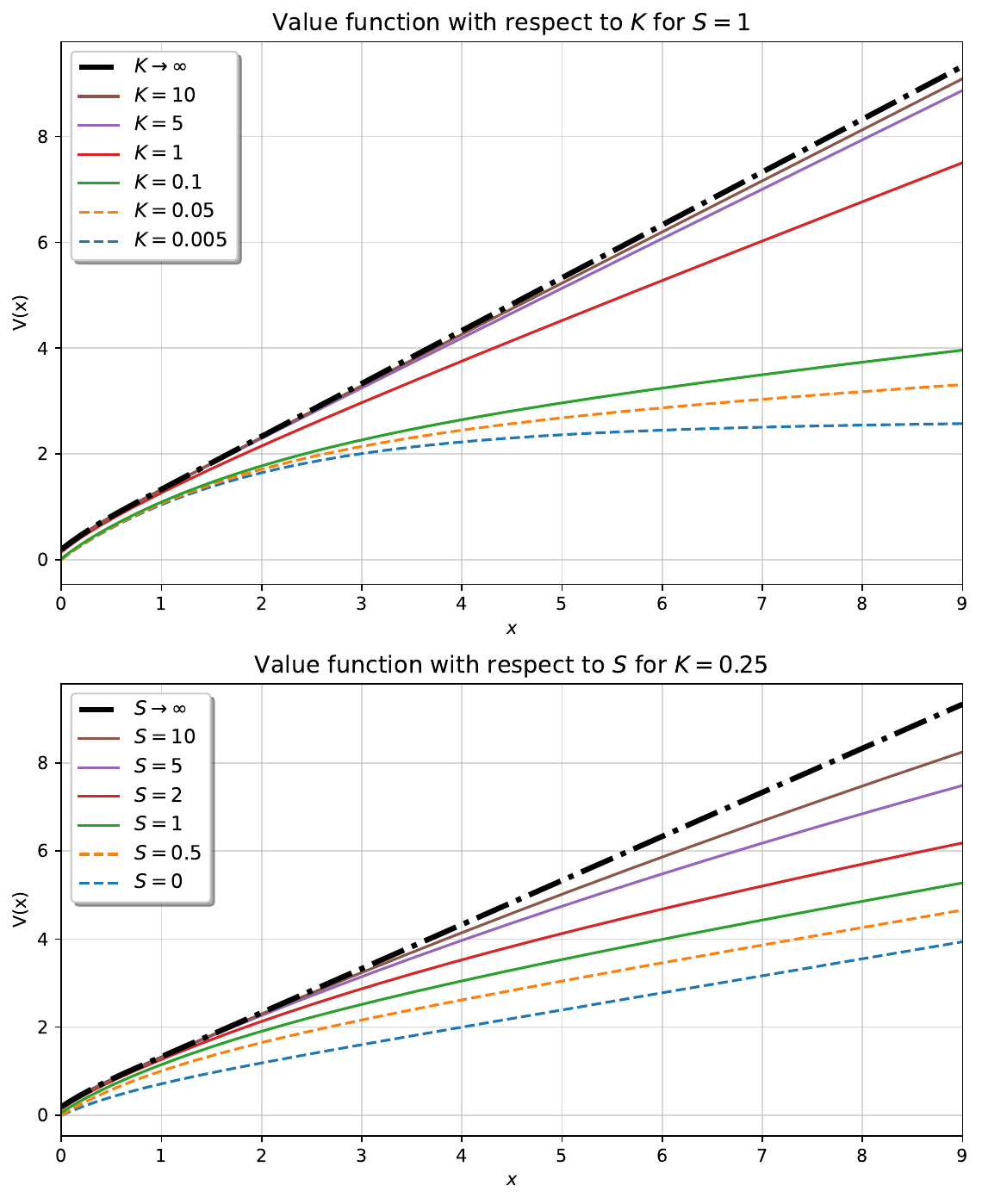}
	\caption{Value function with respect to $K$ or $S$ when $P=0$}\label{fig:value-K-S}
	\scriptsize
    A dashed line indicates that $V=V_d$ while a solid line indicates that $V=V_c$. The bold dashed-dotted lines represent the value function $V_c$ of the limit problem; that is, function $h = V$ in \cite{lokka-zervos_2008}. Parameters: $\mu = 0.5, \; \sigma^2 = 1, \; q = 0.4 , \; P=0$.
\end{figure}

In Figure~\ref{fig:value-K-S}, we illustrate the sensitivity of the value function $V$ with respect to the parameters $K$ and $S$, which characterize the set of admissible dividend payout rates. Intuitively, if $K$ and/or $S$ are large, a mean-reverting dividend strategy and its associated controlled process should be \textit{close} to a reflective barrier strategy and a reflected Brownian motion with drift, respectively, as studied in \cite{lokka-zervos_2008} (when $P=0$). For comparison purposes, in what follows we take $P=0$ and we use the notation $h$ for the value function of the singular control problem studied in \cite{lokka-zervos_2008}. Despite the fact that $V<h$, we want to illustrate the difference.

In Figure~\ref{fig:value-K-S}, the set of parameters is such that, in the singular control problem of L\o kka \& Zervos \cite{lokka-zervos_2008}, it is optimal to always inject when the process reaches zero. In our absolutely continuous control problem, we have $V=V_d>V_c$ for \textit{small} values of $K$ and/or $S$ (as indicated by the dashed lines), and we have $V=V_c>V_d$ for \textit{larger} values of $K$ and/or $S$. A first observation is that our value function $V$ gets closer to $h$ rapidly when either $K$ or $S$ becomes large, even if the other is small. This means that shareholders can almost reproduce the value generated from the optimal singular solution whilst enjoying the \textit{desirable properties} of a mean-reverting dividend strategy with an affine bound on the payout rates.  

\subsection{Sensitivity of the optimal threshold with respect to \texorpdfstring{$K$}{K} and \texorpdfstring{$S$}{S}}\label{subsec:num-K-S}

\begin{figure}
	\centering	
	\includegraphics[scale=0.7]{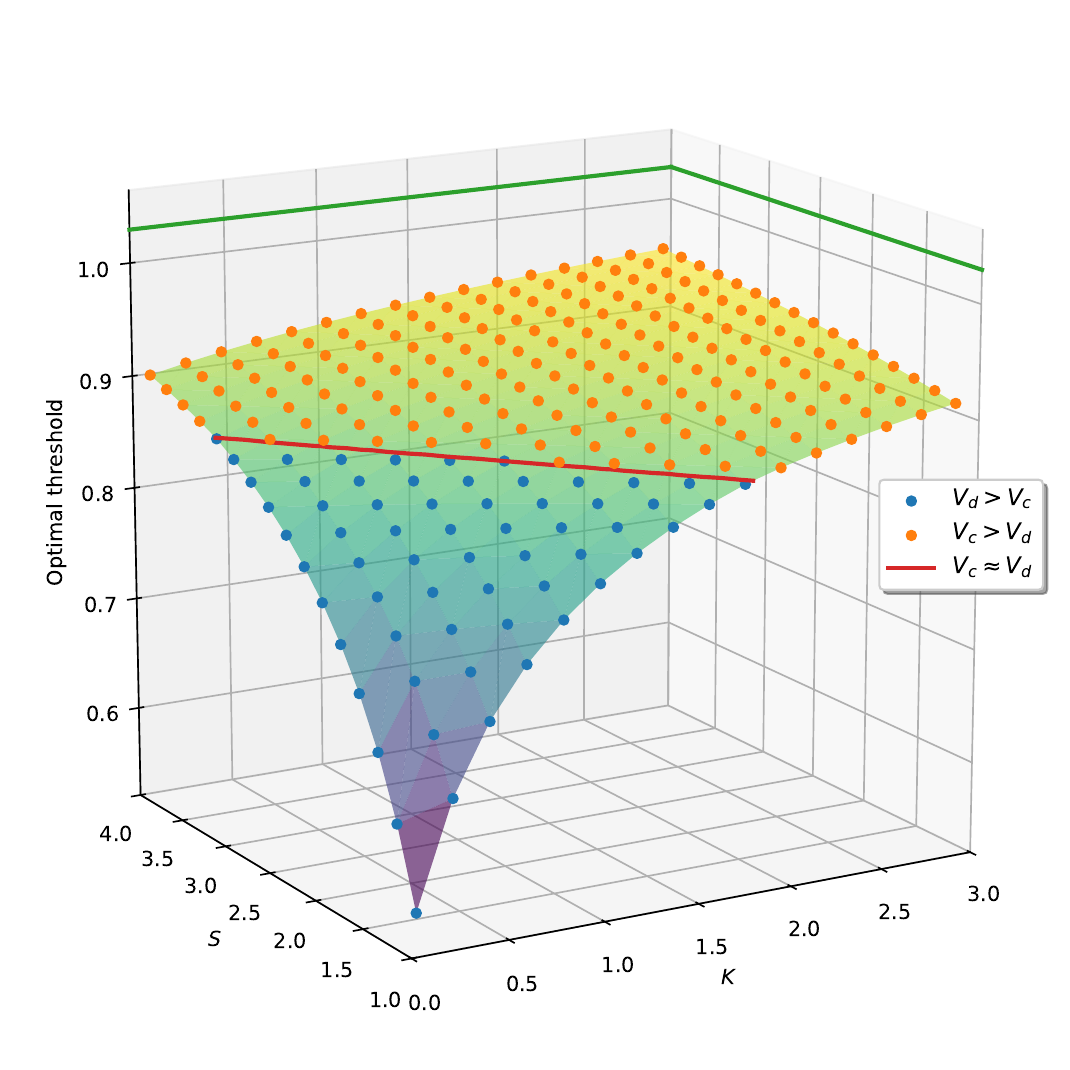}
	\caption{The optimal threshold with respect to $K$ and $S$ (and when $P=0$)}\label{fig:barrier-K-S}
	\scriptsize
    The parameter $\beta$ is chosen so that for $K=1$ and $S=2$, we have $V_c = V_d$. The red line numerically approximates the values of $K,S$ for which $V_c=V_d$. The green line corresponds to the optimal barrier of the singular problem in \cite{lokka-zervos_2008}. Parameters: $\mu = 0.5, \; \sigma^2 = 1, \; q = 0.4, \; P=0, \; \beta = 1.6633$.
\end{figure}

Figure~\ref{fig:barrier-K-S} illustrates the optimal threshold with respect to (small values of) $K$ and $S$. Blue dots indicate that a \textit{pure-dividend mean-reverting strategy} is optimal, while orange dots indicate that a \textit{bailout-dividend mean-reverting strategy} is optimal. The plot was first constructed by selecting $K=1$ and $S=2$, and then finding $\beta$ such that $V_c = V_d$. Afterward, we considered values of $K,S$ around these initial  values.

Figure~\ref{fig:barrier-K-S} suggests that $V=V_c$ for sufficiently large $K,S$, as mentioned above, and that the optimal threshold is increasing with respect to both $K$ and $S$. Intuitively, for large values of $K,S$, the optimal controlled process is \textit{strongly refracted} at the threshold, almost like a reflection; in other words, the process is kind of constrained to stay between zero and the threshold. In this case, it is better to have a higher threshold to stay away from zero, so as to avoid ruin or avoid making too many capital injections. Since the process is pushed more strongly towards zero and dividend payments are more rewarded, it is preferable to inject capital in order to avoid ruin and keep paying dividends.

Interestingly, there seems to be a somewhat linear boundary with respect to $K,S$ for the determination of the optimal strategy in the dichotomy, as shown by the red line in Figure~\ref{fig:barrier-K-S}. It indicates that there exist infinitely many $K,S$ such that $V_c=V_d$.

\section*{Funding and competing interests}

This work was supported by a CANSSI Qu\'ebec Recruitment Scholarship (CQRS), and by a Graduate Research Scholarship (BESRCD-612070-2026) and Discovery Grants (RGPIN-2025-05758 and RGPIN-2020-06619) from the Natural Sciences and Engineering Research Council of Canada (NSERC).

The authors have no competing interests to declare that are relevant to the content of this article.

All authors contributed equally to this work.
%%%%%%%%%%%%%%%%%%%%%%%%%%%%%%%%%%%%%%%%%%%%%
%%%%%%%%%%%%%%%%%%%%%%%%%%%%%%%%%%%%%%%%%%%%%
\bibliographystyle{chicago}
\bibliography{references_de_finetti_Tommy}
%%%%%%%%%%%%%%%%%%%%%%%%%%%%%%%%%%%%%%%%%%%%%%
%%%%%%%%%%%%%%%%%%%%%%%%%%%%%%%%%%%%%%%%%%%%%%%
\appendix

\counterwithin{equation}{section}   

\section{Special functions}\label{app:scalefunctions}

Here are analytical expressions for the functions introduced in Section~\ref{sec:sub-problems}. First, we have
\begin{equation}
    \Wq(x)= \frac{2}{\Delta} \mathrm{e}^{-\frac{\mu}{\sigma^2} x} \sinh \left(\frac{\Delta}{\sigma^2} x \right), \quad x\in\reals,
\end{equation}
where $\Delta = \sqrt{\mu^2 + 2\sigma^2 q}$. Also, we have
\begin{equation}
    \Hq(x) = \mathrm{e}^{\frac{K}{2\sigma^2} \left(x-\frac{\mu-S}{K}\right)^2} \mathrm{D}_{-\frac qK} \left(\left(x-\frac{\mu-S}{K} \right) \frac{\sqrt{2K}}{\sigma} \right), \quad x\geq 0,
\end{equation}
where
\begin{equation}
    \mathrm{D}_{-\lambda}(x) = \frac{1}{\Gamma(\lambda)} \mathrm{e}^{-x^2/4} \int_0^\infty t^{\lambda-1} \mathrm{e}^{-xt-t^2/2} \mathrm{d}t 
\end{equation}
is the parabolic cylinder function.

Finally, let us define, for $b \geq 0$,

\begin{equation}\label{def-F}
    F(b) = \frac{q}{q+K} \frac{\Hq(b)}{\Hqprime(b)} + \ettap{b} - \frac{\Wq(b)}{\Wqprime(b)} \left(1 + P \frac{\Wqprime(0)}{\Wq(-b)} \right)
\end{equation}
and
\begin{equation}\label{def-G}
    G(b) = \frac{q}{q+K}\frac{\Hq(b)}{\Hqprime(b)} + \ettap{b} - \frac{\Wq(b)}{\Wqprime (b)}\left(1 - g(b)\frac{\Wqprime (0)}{\Wq(-b)}\right),
\end{equation}
where
\begin{equation}
    g(b) := \frac{\beta\left(\frac{\Wqprime(b)}{\Wq(b)}-\frac{\Hqprime(b)}{\Hq(b)}\right) - \left(\frac{K}{q+K}-\left[\ettam{b}\right]\frac{\Hqprime(b)}{\Hq(b)}\right)\frac{\Wqprime(0)}{\Wq(b)}}{\frac{\Wq(-b)}{\Wqprime(-b)}\left(\frac{\Wqprime(b)}{\Wq(b)}-\frac{\Hqprime(b)}{\Hq(b)}\right) - \frac{\Wqprime(0)^2}{\Wq(b)\Wq(-b)}}.
\end{equation}

\section{Explicit expressions for the value functions of the subproblems}\label{app:Vd}

We also provide an explicit expression for the value function $V_d$ of Theorem~\ref{thm:sol-Jd}, as derived in \cite{renaud-simard_2021, rao_2023, fendri_2025}. If
    \begin{equation}
        \frac{q}{q+K}\frac{\Hq(0)}{\Hqprime(0)} + \ettaz + P > 0,
    \end{equation}
then
   \begin{equation}\label{Vd-expli}
        V_d(x) =
        \begin{cases}
            -P\frac{\Wq(x-b_d)}{\Wq(-b_d)} + \left(1 + P\frac{\Wqprime (0)}{\Wq(-b_d)}\right)\frac{\Wq(x)}{\Wqprime (b_d)}, &0\leq x< b_d, \\
            \hfil \ettap{x} + \frac{q}{q+K}\frac{\Hq(x)}{\Hqprime (b_d)}, &0 < b_d \leq x.
        \end{cases}
    \end{equation}
Otherwise, we have
    \begin{equation}\label{Vd-bd0-expli}
        V_d(x) = \ettap{x} - \left(\ettaz + P\right)\frac{\Hq(x)}{\Hq(0)}, \quad x\geq 0.
    \end{equation} 

Let us also provide an explicit expression for the value function $V_c$ of Theorem~\ref{thm:sol-Jc}, as derived in \cite{mastromonaco_2025, renaud-et-al_2023}. We have
   \begin{equation}\label{Vc-expli}
        V_c(x) =
        \begin{cases}
            \frac{\Wq(x)\left(\Wqprime (-b_c)-\beta \Wqprime (0)\right) + \Wq(x-b_c)\left(\beta \Wqprime (b_c) - \Wqprime (0)\right)}{\Wqprime (-b_c) \Wqprime (b_c) - \Wqprime (0)^2}, &0\leq x< b_c, \\
            \hfil \ettap{x} + \frac{q}{q+K}\frac{\Hq(x)}{\Hqprime (b_c)}, &0 < b_c \leq x.
        \end{cases}
    \end{equation}

\end{document}